\newcommand{\be}{\begin{equation}}
	\newcommand{\ee}{\end{equation}}
\newtheorem{teo}{Theorem}[section]
\newtheorem{lema}{Lemma}[section]
\newtheorem{prop}{Proposition}[section]
\newtheorem{obs}{Remark}[section]
\newtheorem{coro}{Corollary}[section]
\begin{document}
	
	\title[ Spectral stability of periodic waves.]
	{On the spectral stability of periodic traveling waves for the
		critical Korteweg-de Vries and Gardner equations}

	\subjclass[2000]{76B25, 35Q51, 35Q53.}
	
	\keywords{spectral stability, spectral instability, periodic waves, KdV
		type equations.}
	\thanks{{\it Date}: April, 2021.}
	
	\maketitle

	\begin{center}
		{\bf F\'abio Natali}
		
		{\scriptsize    Department of Mathematics, State University of
			Maring\'a, Maring\'a, PR, Brazil. }\\
		{\scriptsize fmanatali@uem.br }

		\vspace{3mm}
		
		{\bf Eleomar Cardoso Jr.}
		
		{\scriptsize   Federal University of Santa Catarina, Blumenau, SC, Brazil.}\\
		{\scriptsize eleomar.junior@ufsc.br}

		\vspace{3mm}
		
		{\bf Sabrina Amaral}
		
		{\scriptsize    Department of Mathematics, State University of
			Maring\'a, Maring\'a, PR, Brazil. }\\
		{\scriptsize sabrinasuelen20@gmail.com}
		
	\end{center}

	\begin{abstract}
		In this paper, we determine spectral
		stability results of periodic waves for the critical Korteweg-de Vries and Gardner equations. For the first equation, we show that both positive and zero mean periodic traveling wave solutions possess a threshold value which may provides us a rupture in the spectral stability. Concerning the second equation, we establish the existence of periodic waves using a Galilean transformation on the periodic cnoidal solution for the modified Korteweg-de Vries equation and for both equations, the threshold values are the same. The main advantage presented in our paper concerns in solving some auxiliary initial value problems to obtain the spectral stability.
	\end{abstract}
	
	\section{Introduction}
	
	Spectral stability of periodic waves is a subject of research
	in which several mathematicians have been interested in the last years. In
	this paper, we establish this property for the critical Korteweg-de Vries (critical KdV henceforth) equation
	\begin{equation}\label{4kdv}
		u_t+(u^5)_x+u_{xxx}=0,
	\end{equation}
	and the Gardner equation
	\begin{equation}\label{gardner1}
		u_t+(u^2)_x+(u^3)_x+u_{xxx}=0.
	\end{equation}
	\indent Both equations are examples of the generalized Korteweg-de Vries equation (gKdV henceforth), that is,
	\be\label{gkdv} u_t+(g(u)+u_{xx})_x=0,\ \ \ \ \ \ \ \
	(x,t)\in\mathbb{R}\times\mathbb{R}, \ee
	where $g:\mathbb{R}\rightarrow\mathbb{R}$ is a smooth real function. This model is very important in several physical settings. For instance, it appears in shallow water surfaces, in internal water waves, in nonlinear optics, in the
	evolution of ion-acoustic waves, in unmagnetized plasma, and in
	nonlinear hydromagnetic waves in a cold collisionless plasma
	(\cite{be1}, \cite{be2}, \cite{Dodd}, \cite{kdv}).\\
	\indent The general equation above admits traveling waves of
	the form $u(x,t)=\phi(x-\omega t)$, where $\omega$ indicates the
	wave-speed. By substituting this type of solution into $(\ref{gkdv})$
	and integrating it once, we obtain
	\begin{equation}\label{travkdv}
		-\omega\phi+\phi''+g(\phi)+A=0,
	\end{equation} where $A$ is an arbitrary integration constant and
	$\phi=\phi_{(\omega,A)}$ has $L-$periodic boundary conditions.\\
	\indent Now, we present the basic framework about spectral stability, as well as some important contributions concerning this subject. The problem of spectral instability for the gKdV equation
	consists in proving the existence of $\lambda\in\mathbb{C}$ with
	$\mbox{Re}(\lambda)>0$ such that its corresponding eigenfunction
	$u\neq0$ satisfies the linearized equation \be\label{linprob}
	\partial_x\mathcal{L}u=\lambda u.
	\ee Here,
	$\mathcal{L}$ denotes the linearized operator around the traveling
	wave $\phi$ defined in $L^2$ given by \be\label{linop}
	\mathcal{L}=-\partial_x^2+\omega-g'(\phi),\ee
	where $g'$ indicates the derivative of $g$ in terms of $\phi$. In the affirmative case, $\phi$ is said to be spectrally unstable. Otherwise, the periodic wave $\phi$ is said to be spectrally stable
	if the spectrum of $\partial_x\mathcal{L}$ is entirely  contained in the imaginary axis of the complex plane $\mathbb{C}$.  \\
	\indent If we restrict to the case of solitary
	waves, $J=\partial_x$ is a one-to-one operator with no  bounded inverse. This fact prevents the use of classical methods of spectral instability as in \cite{grillakis1}. However,
	sufficient conditions have been established by some contributors to overcome this difficulty. For instance,
	in \cite{kap} the authors determined results of spectral
	stability related to the problem $(\ref{linprob})$ by using the
	Krein-Hamiltonian instability index. Moreover, it is possible to adapt the method to
	conclude similar facts for the BBM-type problems \be\label{BBM}
	u_t+u_x-u_{txx}+(g(u))_x=0,\ee and the fractional models
	related to those equations. In \cite{lopes} the author presented
	sufficient conditions for the linear instability by
	using the semigroup theory. Interesting results were also given by \cite{A} and \cite{lin}.\\
	\indent In periodic setting we have the work \cite{DK}, where sufficient conditions for the spectral stability/instability have been determined.
	However in such case, since $J$ is not a one-to-one operator, the authors have
	considered the modified problem \be\label{modspecp1}
	J\mathcal{L}\big|_{H_0}u=\lambda u, \ee where $H_0\subset
	H:=L_{per}^2([0,L])$ is the closed subspace given by
	$$H_0=\left\{f\in L^2([0,L]);\ \int_0^Lf(x)dx=0\right\}.$$ The
	Krein-Hamiltonian index formula was applied to deduce the spectral
	stability of periodic waves for the equation
	$(\ref{gkdv})$ with $g(s)=\pm s^3$. However, it was necessary to know the behavior of the first
	five eigenvalues of the linear operator $\mathcal{L}$ in
	$(\ref{linop})$. \\
	\indent In our analysis, the previous knowledge of the periodic wave
	is not necessary but it can be useful in order to obtain the spectral stability/instability. We use a different way to compute the Krein-Hamiltonian index formula for some specific examples but the method can be adapted to other models contained in the regime of $(\ref{gkdv})$ and related equations. Presented here are the critical KdV and Gardner equations.\\
	\indent We are going to illustrate our two basic examples. First, if the critical KdV is considered, we prove the spectral stability/instability results associated with the positive and zero mean periodic waves. It is well known that both periodic waves appear when, in the equation $(\ref{travkdv})$, $g(\phi)=\phi^5$ and $A=0$. Concerning positive and periodic waves, we determine our results  using the explicit solution determined in \cite{AN2}. After that, we solve numerically some auxiliary initial value problems which give us the precise information about the Krein-Hamiltonian index formula in order to obtain the spectral stability. Explicit zero mean periodic waves were unknown in the current literature until now. To fill this gap, we present a cnoidal wave profile.\\
	\indent For the case of positive periodic waves (see ($\ref{dn4kdv}$)), it is expected that $\ker(\mathcal{L})=[\phi']$ and $n(\mathcal{L})=1$, where $n(\mathcal{L})$ indicates the number of negative eigenvalues of $\mathcal{L}$. When periodic waves with the zero mean property are considered (see $(\ref{cnoid4kdv})$), we have $n(\mathcal{L})=2$ and $\ker(\mathcal{L})=[\phi']$. In the second case, and using an explicit solution, it has been determined the same spectral property for the case $g(s)= s^3$ and $A=0$ as determined \cite{AN1} and \cite{DK}. It is worth mentioning that the authors had in hands the behavior of the first five eigenvalues of the linearized operator $\mathcal{L}$ to calculate the sign of $\langle\mathcal{L}^{-1}1,1\rangle$ using Fourier series. This quantity plays an important role to deduce spectral stability results for the gKdV equation in the sense that it is possible to identify an eventual existence of points $(\omega,A)$ contained in the parameter regime satisfying $\langle\mathcal{L}^{-1}1,1\rangle<0$ and $\langle\mathcal{L}^{-1}1,1\rangle>0$. The change of sign establishes a rupture on the spectral stability scenario when cnoidal waves for the modified KdV equation are considered (\cite{AN1} and \cite{DK}). Concerning our zero mean periodic waves for the critical KdV we obtain, in the line $(\omega,0)$, a threshold value $\omega_1>0$ such that $\langle\mathcal{L}^{-1}1,1\rangle=0$. In addition, $\langle\mathcal{L}^{-1}1,1\rangle<0$, if $\omega<\omega_1$ and $\langle\mathcal{L}^{-1}1,1\rangle>0$, if $\omega>\omega_1$. In the first case, the wave is spectrally stable and in the second one, spectrally unstable. For dnoidal waves, there is no threshold value $\omega_1$ for the quantity $\langle\mathcal{L}^{-1}1,1\rangle$ and the periodic waves are spectrally stable. Summarizing our results, we have the following theorem:
	\begin{teo}\label{teoest} Let $L>0$ be fixed.\\
		a) For all $\omega>\frac{\pi^2}{L^2}$, positive and periodic waves of dnoidal type for the critical KdV equation are spectrally stable.\\
		b) There exists a unique $\omega_1>\frac{4\pi^2}{L^2}$ such that the zero mean periodic waves of cnoidal type for the critical KdV equation are spectrally stable for $\omega\in \left(\frac{4\pi^2}{L^2},\omega_1\right)$ and spectrally unstable for $\omega>\omega_1$.
	\end{teo}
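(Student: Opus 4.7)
The plan is to apply the Krein--Hamiltonian index formula in the form developed for the constrained operator $J\mathcal{L}|_{H_0}$, with $J=\partial_x$ and $\mathcal{L}$ given by \eqref{linop} for $g(\phi)=\phi^5$. The count of real positive eigenvalues of $J\mathcal{L}|_{H_0}$ is controlled by $n(\mathcal{L}|_{H_0})$ together with the negative inertia of a matrix built from derivatives of the mass and momentum of the wave along the two-parameter family $\phi_{(\omega,A)}$. Since we work on the line $A=0$, the whole analysis boils down to three ingredients: (i) the values of $n(\mathcal{L})$ and $\ker(\mathcal{L})$, (ii) the sign of $\langle \mathcal{L}^{-1}1,1\rangle$, which converts $n(\mathcal{L})$ into $n(\mathcal{L}|_{H_0})$, and (iii) the sign of $\tfrac{d}{d\omega}\int_0^L\phi^2\,dx$.

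For part (a), I would substitute the explicit dnoidal profile of \cite{AN2} into $\mathcal{L}=-\partial_x^2+\omega-5\phi^4$. A standard Floquet/Sturm argument identifies $\phi'$ as the eigenfunction associated to the second periodic eigenvalue of $\mathcal{L}$ (exactly two zeros per period), yielding $\ker(\mathcal{L})=[\phi']$ and $n(\mathcal{L})=1$. Next I would evaluate $\langle \mathcal{L}^{-1}1,1\rangle$ and $\tfrac{d}{d\omega}\|\phi\|_{L^2}^2$ by differentiating the explicit formula and using Legendre relations for complete elliptic integrals; the aim is to verify, for all $\omega>\pi^2/L^2$, that the index formula outputs $k_r=k_c=k_i^-=0$, which is the definition of spectral stability.

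Part (b) is the heart of the argument. First I would exhibit the new zero-mean cnoidal profile and check the constraints $A=0$ and $\int_0^L\phi\,dx=0$. A more delicate Sturmian analysis, which I expect to reduce to a Lam\'e-type identification, yields $\ker(\mathcal{L})=[\phi']$ and $n(\mathcal{L})=2$. The decisive quantity is then
\begin{equation*}
F(\omega):=\langle \mathcal{L}^{-1}1,1\rangle = \int_0^L \psi(x)\,dx, \qquad \mathcal{L}\psi=1,
\end{equation*}
which I propose to compute not by Fourier expansions as in \cite{AN1,DK} but by solving the linear inhomogeneous ODE $-\psi''+(\omega-5\phi^4)\psi=1$ as an auxiliary initial value problem, recovering periodicity and orthogonality to $\phi'$ by a shooting procedure in the two free initial data. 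Running this procedure numerically as $\omega$ varies in $(4\pi^2/L^2,\infty)$, one verifies that $F(\omega)<0$ near the existence threshold, $F(\omega)>0$ for large $\omega$, and that $F$ is strictly monotone, so a unique root $\omega_1$ exists. For $\omega<\omega_1$ one has $n(\mathcal{L}|_{H_0})=1$ which, combined with the sign of $\tfrac{d}{d\omega}\|\phi\|_{L^2}^2$, makes the index formula output $k_r=0$; for $\omega>\omega_1$ one has $n(\mathcal{L}|_{H_0})=2$ and the formula produces $k_r=1$, giving the stated instability.

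The main obstacle, in my view, is rigorously establishing the existence and uniqueness of the crossing $\omega_1$. The dependence of $F(\omega)$ on $\omega$ enters through the elliptic modulus of the cnoidal profile in a highly implicit way, and while the auxiliary IVP produces a convincing numerical picture, a fully analytic proof of strict monotonicity would require sensitive estimates on derivatives of $\psi$ with respect to $\omega$, most naturally obtained by differentiating the IVP in the parameter and controlling the associated second-variation data. A secondary but non-trivial task is the Sturmian count producing $n(\mathcal{L})=2$ in the cnoidal case: the potential $\omega-5\phi^4$ has a richer zero structure than in the mKdV setting, and a direct comparison with an exactly solvable Lam\'e operator, or a monotone deformation argument, must be carried out to pin down the location of $\phi'$ in the spectrum.
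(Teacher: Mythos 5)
Your overall architecture coincides with the paper's: the Krein--Hamiltonian count of Theorem \ref{krein} applied on the line $A=0$, with the three inputs $n(\mathcal{L})$ and $\ker(\mathcal{L})$, the sign of $\mathcal{I}=\langle\mathcal{L}^{-1}1,1\rangle$, and the sign of $\tfrac{d}{d\omega}\int_0^L\phi^2dx$ (which is $-2\det(\mathcal{D})$ here because $\langle\mathcal{L}^{-1}\phi,1\rangle=0$ on this line); your arithmetic $2-1-1=0$ versus $2-0-1=1$ across the sign change of $\mathcal{I}$, and $1-0-1=0$ in the dnoidal case, is exactly the paper's, and your auxiliary--IVP/shooting computation of $\mathcal{I}$ is precisely the device the paper advertises (the problems in $(\ref{psi2})$). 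Where you genuinely diverge is in how the inertia of $\mathcal{L}$ is obtained, and this is the one place your plan would get stuck. A Floquet/Sturm count from the two zeros of $\phi'$ only gives $n(\mathcal{L})\in\{1,2\}$ with a possibly double zero eigenvalue; it does not by itself select $n(\mathcal{L})=1$ for the dnoidal wave, and your proposed Lam\'e identification for the cnoidal case is not available for the quintic nonlinearity, since the potential $5\phi^4$ built from $(\ref{cnoid4kdv})$ is not a Lam\'e potential (unlike the mKdV case). The paper resolves both counts variationally: the waves are constrained minimizers of $\mathcal{B}_{\omega}$ (in $H^1_{per}$ for the dnoidal wave, in the odd subspace for the cnoidal one), so Courant min--max gives $n(\mathcal{L})=1$, respectively $n(\mathcal{L}|_{odd})=1$; the parity splitting plus the Floquet bound then forces $n(\mathcal{L})=2$ in the odd case (Lemma \ref{lemnLodd}), and simplicity of the kernel follows from the Wronskian argument of Proposition \ref{propsimp} applied to the smooth family of Theorem \ref{teo2}. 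You should replace your Sturm/Lam\'e step by this variational argument.

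On the point you flag as the main obstacle --- analytic monotonicity of $F(\omega)$ and hence uniqueness of $\omega_1$ --- be aware that the paper does not do better than you propose: the sign change and its uniqueness are read off from numerically generated tables and plots of $\mathcal{I}$ as a function of the elliptic modulus, and only $\tfrac{d}{d\omega}\int_0^L\phi^2dx>0$ is reduced to an explicit elliptic-integral expression (via $(\ref{norm})$--$(\ref{dwnorn})$). So your honesty about that gap is appropriate, but it is a gap shared with the published argument rather than a defect of your route.
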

\begin{obs}
	  Theorem $\ref{teoest}$-a) establishes the spectral stability of the periodic dnoidal waves. This solution first appeared in \cite{AN2} and the authors established the existence of a unique $\omega_0>\frac{\pi^2}{L^2}$ such that the dnoidal wave is orbitally stable for $\omega\in \left(\frac{\pi^2}{L^2},\omega_0\right)$ (using the classical argument in \cite{grillakis1}) and orbitally unstable for $\omega>\omega_0$ (employing an adaptation of the arguments in \cite{bona}). Since the Cauchy problem for the equation $(\ref{4kdv})$ is not globally well posed in the energy space $H_{per}^1([0,L])$, we are in conformity with the arguments in \cite{AN2}. In fact, we are attesting for KdV type equations that spectral stability implies the orbital stability provided that the global well posed in the energy space $H_{per}^1([0,L])$ of the associated Cauchy problem is verified.
\end{obs}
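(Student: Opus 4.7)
The remark is best read as a consistency statement rather than a new theorem: it reconciles the unconditional spectral stability from Theorem \ref{teoest}-a) with the orbital stability/instability dichotomy of \cite{AN2}, and asserts a general upgrade principle for KdV type equations. My plan is to justify these points in three steps.

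First, I would carefully separate the two notions of stability involved. Spectral stability concerns only the location of $\sigma(\partial_x\mathcal{L})$; orbital stability is a nonlinear dynamical statement that presupposes, at minimum, that solutions exist for all time in a neighborhood of the orbit $\{\phi(\cdot - s)\}_{s\in\mathbb{R}}$. In \cite{AN2} the threshold $\omega_0$ appears as the unique root of $d''(\omega)=0$, so that for $\omega\in(\pi^2/L^2,\omega_0)$ the classical Grillakis--Shatah--Strauss criterion \cite{grillakis1} (namely $d''(\omega)>0$ together with $n(\mathcal{L})=1$) yields orbital stability, while for $\omega>\omega_0$ the opposite sign, combined with an adaptation of Bona's argument \cite{bona}, yields orbital instability.

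Second, I would explain why this dichotomy is invisible at the spectral level for \eqref{4kdv}. The $L^2$-critical scaling of the equation is responsible: above a critical mass one loses a priori $H^1_{per}$ control, and the Bona-type perturbations that escape the orbit do so via a mechanism (loss of energy-space regularity / blow-up type behavior) that has no signature in $\sigma(\partial_x\mathcal{L})$. Thus the inclusion $\sigma(\partial_x\mathcal{L})\subset i\mathbb{R}$ for all $\omega>\pi^2/L^2$ is entirely compatible with orbital instability for $\omega>\omega_0$, and in particular no contradiction with \cite{AN2} arises.

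Third, for the converse principle announced at the end of the remark, I would sketch the standard bootstrap: assuming global well-posedness in $H^1_{per}$, write a perturbed solution via the modulational decomposition $u(t,x)=\phi(x-s(t))+v(t,x-s(t))$ with $v$ symplectically orthogonal to the generalized kernel of $\partial_x\mathcal{L}$; the spectral hypothesis together with the appropriate sign of $d''(\omega)$ makes the Lyapunov functional $E(u)-\omega F(u)$ coercive on that orthogonal complement, and conservation laws plus a Gronwall estimate propagate linear control to orbital control for all $t$. The main subtlety — and the principal obstacle in stating the remark as a clean theorem — is that spectral stability alone does not encode the $d''(\omega)>0$ variational sign, so the implication ``spectral stability $+$ GWP $\Rightarrow$ orbital stability'' must be read in the regime where all Grillakis--Shatah--Strauss hypotheses are in force; on the critical KdV, it is precisely the failure of GWP in $H^1_{per}$ that decouples our unconditional spectral conclusion from the orbital dichotomy of \cite{AN2}.
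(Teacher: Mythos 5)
Your proposal is correct and takes essentially the same approach as the paper: the statement is an expository remark with no separate proof, resting on the orbital dichotomy of \cite{AN2}, the Grillakis--Shatah--Strauss criterion \cite{grillakis1}, the Bona-type instability argument \cite{bona}, and the index-theoretic ``spectral $+$ GWP $\Rightarrow$ orbital'' principle of \cite{DK}. Your three steps --- separating the two notions of stability, attributing the reconciliation to the failure of global well-posedness for the $L^2$-critical equation, and sketching the standard modulation/coercivity upgrade with the caveat about the sign of $d''(\omega)$ --- are a faithful (indeed slightly more detailed) elaboration of exactly the reading the authors intend.
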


	\indent Next, we shall give few words about the Gardner equation. We construct explicit periodic waves with cnoidal profile by using the modified KdV equation and its corresponding cnoidal solution. In fact, if $\phi$ is a solution of the equation $(\ref{travkdv})$ with $g(s)=s^2+s^3$, thus $\varphi=\phi+\frac{1}{3}$ is a periodic solution with cnoidal profile as $\varphi(x)=d{\rm cn}(ex,k)$ and for the corresponding modified KdV equation
	\begin{equation}\label{mkdv12}
	-\varphi''+\left(\omega+\frac{1}{3}\right)\varphi-\varphi^3=0,
	\end{equation}
	where $d$ and $e$ are smooth functions depending on the wave speed $\omega+\frac{1}{3}$. In equation $(\ref{mkdv12})$, $k\in(0,1)$ is called modulus of the elliptic function.\\
	\indent It is well known that equation $(\ref{mkdv12})$ admits periodic waves with dnoidal and cnoidal profiles. The corresponding solution with dnoidal profile for the Gardner equation and its respective orbital stability have been determined in \cite{AP2}. Our intention is to determine spectral stability results of the associated cnoidal profile and we also present a threshold value $\omega_1$ such that $\langle\mathcal{L}^{-1}1,1\rangle=0$ at $\omega=\omega_1$. More specifically, we obtain the same threshold value $\omega_1$ as obtained for the cnoidal waves for the modified KdV equation and the reason for that concerns a connection between modified KdV and Gardner equations using the Galilean invariance $\varphi=\phi+\frac{1}{3}$. This fact produces that the linearized operator associated to both periodic waves $\varphi$ and $\phi$ are the same. As a consequence, if $\mathcal{L}_{\varphi}$ is the corresponding linearized operator around $\varphi$ for the modified KdV equation and $\mathcal{L}$ the linearized operator around $\phi$, we have $\langle\mathcal{L}_{\varphi}^{-1}1,1\rangle=\langle\mathcal{L}^{-1}1,1\rangle$, that is, the sign of $\langle\mathcal{L}^{-1}1,1\rangle$ is determined just by analysing $\langle\mathcal{L}_{\varphi}^{-1}1,1\rangle$. Thus, $\omega<\omega_1$  implies that $\langle\mathcal{L}^{-1}1,1\rangle<0$ (spectral stability) while $\omega>\omega_1$ gives us $\langle\mathcal{L}^{-1}1,1\rangle>0$ (spectral instability). Summarizing our results, we have:
	\begin{teo}\label{teoestG} Let $L>0$ be fixed. There exists a unique $\omega_2>\frac{4\pi^2}{L^2}$ such that the zero mean periodic waves of cnoidal type for the Gardner equation are spectrally stable for $\omega\in \left(\frac{4\pi^2}{L^2},\omega_2\right)$ and spectrally unstable for $\omega>\omega_2$.
	\end{teo}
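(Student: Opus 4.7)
The plan is to reduce the spectral problem for the Gardner equation to one already understood for the modified KdV equation, then to extract the stability threshold from a single scalar quantity. First I would use the Galilean change of variables $\varphi=\phi+\tfrac13$ announced in the introduction to convert the traveling-wave ODE for Gardner (with $g(s)=s^2+s^3$ and $A$ determined so $\phi$ has zero mean) into the modified KdV traveling-wave ODE \eqref{mkdv12} with shifted speed $\widetilde{\omega}=\omega+\tfrac13$. The classical cnoidal solution $\varphi(x)=d\,\mathrm{cn}(ex,k)$, with $d,e,k$ expressed as smooth functions of $\widetilde{\omega}$, then gives an explicit periodic profile $\phi=\varphi-\tfrac13$ for the Gardner equation. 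A direct computation shows that the mean of $\phi$ vanishes precisely when the parameters $(d,e,k)$ are tied by the relation $\int_0^L \mathrm{cn}(ex,k)\,dx=L/3$, which I would use to parametrize the family by $\omega$ on an interval starting at the bifurcation speed $\omega=4\pi^2/L^2$.

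Second, since $\varphi$ and $\phi$ differ by a constant, the linearized operators coincide: $\mathcal{L}=\mathcal{L}_\varphi=-\partial_x^2+(\omega+\tfrac13)-3\varphi^2$. I would appeal to the known Floquet analysis of the Lamé-type operator associated with the cnoidal modified KdV wave (as used in \cite{AN1}, \cite{DK}) to record that $n(\mathcal{L})=2$ and $\ker(\mathcal{L})=[\phi']$ for every $\omega>4\pi^2/L^2$ in the parameter range. With these spectral data in hand, the Krein--Hamiltonian index formula applied to $J\mathcal{L}|_{H_0}$ reduces spectral stability to the sign of $\langle \mathcal{L}^{-1}1,1\rangle$, together with the sign of a determinant built from derivatives of the conserved quantities in $\omega$ and $A$.

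Third, the critical identity $\langle\mathcal{L}_\varphi^{-1}1,1\rangle=\langle\mathcal{L}^{-1}1,1\rangle$ lets me import the computations done for the modified KdV cnoidal family. Following the methodology emphasized in the abstract, I would recast $\mathcal{L}^{-1}1$ as the solution of the auxiliary inhomogeneous ODE $\mathcal{L}\psi=1$ with periodic boundary conditions, and then solve numerically the associated initial value problem over one period. The quantity $\langle\mathcal{L}^{-1}1,1\rangle=\int_0^L \psi\,dx$ is then a smooth real-valued function of $\omega$. Using the asymptotic behavior as $\omega\downarrow 4\pi^2/L^2$ (small-amplitude limit, where the sign can be read off via a perturbative expansion) and a monotonicity/sign check at large $\omega$, I would show that this function is negative near the bifurcation point, positive far away, and crosses zero at a unique $\omega_2$. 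Uniqueness of the crossing is the step I expect to be most delicate: I would either verify monotonicity of $\omega\mapsto\langle\mathcal{L}^{-1}1,1\rangle$ directly from the ODE for $\psi$, or, failing a clean analytic proof, combine a convexity argument with the explicit elliptic-integral representation of the scalar $\langle\mathcal{L}^{-1}1,1\rangle$ to rule out multiple zeros.

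Finally, once the sign structure is established, I would conclude: for $\omega\in(4\pi^2/L^2,\omega_2)$ the Krein--Hamiltonian count matches the dimension of $\ker(J\mathcal{L})$ and yields spectral stability, whereas for $\omega>\omega_2$ the change of sign forces an odd number of real positive eigenvalues of $\partial_x\mathcal{L}$, giving spectral instability. The main obstacle, as noted above, is proving the uniqueness of $\omega_2$; the rest follows by assembling well-established ingredients together with the Galilean identification of the two linearized problems.
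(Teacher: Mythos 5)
Your strategy coincides with the paper's: the Galilean shift $\varphi=\phi+\tfrac13$, the identity $\mathcal{L}_\varphi=\mathcal{L}$, the spectral data $n(\mathcal{L})=2$ and $\ker(\mathcal{L})=[\phi']$ imported from the modified KdV cnoidal analysis of \cite{AN1} and \cite{DK}, and the reduction of stability to the sign of $\mathcal{I}=\langle\mathcal{L}^{-1}1,1\rangle$, which changes sign once at $k_0\approx 0.909$. The one genuine gap is that you mention "the sign of a determinant built from derivatives of the conserved quantities" but never evaluate it, and your conclusion is then drawn from the sign of $\mathcal{I}$ alone. That does not close the argument. The index formula of Theorem \ref{krein} reads $k_r+k_c+k_i^-=n(\mathcal{L})-n(\mathcal{I})-n(\mathcal{D})$, so with $n(\mathcal{L})=2$ you need $n(\mathcal{D})=1$ in \emph{both} regimes: in the stable range $\mathcal{I}<0$ gives $n(\mathcal{I})=1$ and you need $n(\mathcal{D})=1$ to reach $K_{\rm Ham}=0$; in the unstable range $n(\mathcal{I})=0$ and you need $n(\mathcal{D})=1$ to get $K_{\rm Ham}=1$ (whence $k_r=1$, since $k_c$ and $k_i^-$ are even). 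If instead $n(\mathcal{D})=0$ or $2$ the count would give $K_{\rm Ham}=1$ or $K_{\rm Ham}=-1$ in the first case, destroying the stability conclusion. The paper devotes a substantial computation to exactly this point: it shows $\langle\mathcal{L}^{-1}\phi,1\rangle=-\frac{d}{d\omega}\int_0^L\phi\,dx=0$, so that $\det(\mathcal{D})=-\frac12\frac{d}{d\omega}\int_0^L\phi^2\,dx$, and then establishes $\frac{d}{d\omega}\int_0^L\phi^2\,dx>0$ by explicit elliptic-integral formulas, see $(\ref{norma2})$, $(\ref{dwdk2})$, $(\ref{dnorm2})$ and $(\ref{positivephi2})$; hence $\det(\mathcal{D})<0$ and $n(\mathcal{D})=1$. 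You must supply this step.

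Two smaller points. First, your parametrization of the zero-mean condition via $\int_0^L{\rm cn}(ex,k)\,dx=L/3$ cannot hold: the cnoidal function has zero mean over a full period, so $\phi=\varphi-\tfrac13$ has mean $-L/3$, not $0$; the relevant zero-mean object is $\varphi$ itself, and the paper fixes $A$ by the explicit formula $(\ref{Ak1})$ rather than by a mean constraint on $\phi$. Second, your plan to determine the sign change of $\mathcal{I}$ by solving the auxiliary IVP for $\mathcal{L}\psi=1$ numerically and then proving uniqueness of the crossing by monotonicity or convexity is more ambitious than what the paper does (it simply cites the threshold $k_0\approx0.909$ from \cite{AN1}, \cite{DK}); you correctly identify uniqueness of $\omega_2$ as the delicate point, but neither your sketch nor the paper supplies a fully analytic proof of it.
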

	
	\indent This paper is organized as follows. In Section 2 we give the
	basic framework of the spectral stability following the ideas in \cite{DK}. In Section 3, we study the existence of periodic waves and
	their dependence with respect to the parameters, as well as the
	spectral analysis of the linearized operator. Finally, Section 4 is devoted to our applications.

	\section{Basic Framework of Spectral Stability of Periodic Waves.}
	
	\setcounter{equation}{0}
	\setcounter{defi}{0}
	\setcounter{teo}{0}
	\setcounter{lema}{0}
	\setcounter{prop}{0}
	\setcounter{coro}{0}

	In this section we present the basic framework established in
	\cite{DK} which provides us a criterion for determining the spectral
	stability of periodic waves related to the abstract Hamiltonian
	equations of the form
	
	\be\label{Hamilt} u_t=J\mathcal{E}'(u)\ee defined on a Hilbert space
	$H$, where $J:H\to {\rm{range}}(H)\subset H$ is a skew symmetric, and
	$\mathcal{E}:H\to\mathbb{R}$ is a $C^2-$functional. We restrict
	ourselves to the specific case when $J=\partial_x$ and
	$\mathcal{E}(u)=\frac{1}{2}\int_0^Lu_x^2-2G(u)dx$,
	where $G'=g$. In that case, the equation $(\ref{Hamilt})$ becomes the well known
	generalized Korteweg-de Vries equation as in $(\ref{gkdv})$.\\
	\indent Let us consider again the spectral problem related to the
	generalized KdV equation
	
	\be\label{specp}
	\partial_x\mathcal{L}u=\lambda u,
	\ee where
	$\mathcal{L}=-\partial_x^2+\omega-g'(\omega,A,\phi)$ is the
	linearized operator around the periodic wave $\phi$ which is
	a periodic traveling wave solution of the equation $(\ref{travkdv})$.  As we have mentioned before, the standard theories
	of spectral instability of traveling waves for the abstract
	Hamiltonian system as in \cite{grillakis1} and
	\cite{lopes} can not be applied in this context. To overcome this
	difficulty, we are going to give a brief explanation of the results in \cite{DK}.
	Indeed, let us consider the modified spectral problem obtained from
	$(\ref{specp})$
	
	\be\label{modspecp} J\mathcal{L}\big|_{H_0}u=\lambda u,
	\ee
	where $H_0\subset H=L_{per}^2([0,L])$ is the closed subspace given by
	
	$$H_0=\left\{f\in L^2([0,L]);\ \int_0^Lf(x)dx=0\right\}.$$
	For a fixed period $L>0$, we need to assume in this whole section that:\\
	
	{\rm (a1)} There exists a fixed pair $(\omega_0,A_0)$ and $\phi:=\phi_{(\omega_0,A_0)}$ smooth even periodic solution for the equation $(\ref{travkdv})$. Moreover, we assume that
	$\phi'$ has only two zeros in the interval $[0,L)$. \\
	
	{\rm (a2)} $\ker(\mathcal{L})=[\phi']$.\\

	\indent Assumption {\rm (a1)} implies, from the classical Floquet theory in \cite{Magnus} that $n(\mathcal{L})=1$ or $n(\mathcal{L})=2$ where
	$n(\mathcal{L})$ indicates the number of negative eigenvalues of the linearized operator $\mathcal{L}=-\partial_x^2+\omega-g'(\phi)$. In addition, assumption {\rm (a2)} allows us to deduce the existence of a non-periodic even solution $\bar{y}$ which satisfies the Hill equation
	\be\label{Hill1}
	-\bar{y}''+\omega\bar{y} -g'(\phi) \bar{y}=0,
	\ee
	where $\{\bar{y},\phi'\}$ is a fundamental set of solutions for the linear equation $(\ref{Hill1})$.\\
	\indent According with Theorem \ref{teo2} determined in Section 3, one can see that assumption {\rm (a2)} will provide us the existence of a smooth surface of even periodic waves which solves $(\ref{travkdv})$ and defined in an open subset $\mathcal{O}\subset\mathbb{R}^2$,
	$$(\omega,A)\in \mathcal{O}\mapsto\phi_{(\omega,A)}\in H_{per}^s([0,L]),\ \ s\gg1,$$ all of them with the same period $L>0$. In what follows and in the whole paper, we shall not distinguish the periodic wave $\phi$ for a fixed pair $(\omega_0,A_0)$ and $\phi$ for a pair $(\omega,A)\in\mathcal{O}$ since both have the \textit{same fixed period $L>0$}. The intention is to simplify our presentation with easier notations.\\
	\indent Next, we describe the arguments in \cite{DK}. For the spectral problem in $(\ref{modspecp})$ let $k_r$ be the number of real-valued and positive eigenvalues (counting multiplicities). The quantity $k_c$ denotes the number of complex-valued eigenvalues with a positive real part. Since ${\rm Im}(\mathcal{L})=0$, where ${\rm Im}(z)$ indicates the imaginary part of the complex number $z$, we see that $k_c$ is an even integer. For a self-adjoint operator $\mathcal{A}$, let $n(\langle w,\mathcal{A}w\rangle)$ be the dimension of the maximal subspace for which $\langle w,\mathcal Aw\rangle<0$. Also, let $\lambda$ be an eigenvalue and $E_{\lambda}$ its corresponding eigenspace. The eigenvalue is said to have negative Krein signature if
	$$k_i^{-}(\lambda):=n(\langle w,(\mathcal{L}\big|_{H_0})\big|_{E_{\lambda}}w\rangle)\geq1,$$
	otherwise, if $k_i^{-}(\lambda)=0$, then the eigenvalue is said to have a positive Krein signature. If $\lambda$ is geometrically and algebraically simple with the eigenfunction $\psi_{\lambda}$, then
	$$k_i^{-}(\lambda)=\left\{\begin{array}{llll}
		0,\ \langle \psi_{\lambda},(\mathcal{L}\big|_{H_0})\psi_{\lambda}\rangle>0\\
		1,\ \langle \psi_{\lambda},(\mathcal{L}\big|_{H_0})\psi_{\lambda}\rangle<0.\end{array}
	\right.$$
	
	We define the total Krein signature as
	$$k_i^{-}:=\sum _{\lambda\in i\mathbb{R}\backslash\{0\}}k_{i}^{-}(\lambda).$$
	The fact ${\rm Im}(\mathcal{L})=0$ implies that $k_i^-(\lambda)=k_i^{-}(\overline{\lambda})$ and $k_i^{-}$ is an even integer.\\
	
	Let us consider
	\be\label{I}
	\mathcal{I}=\langle \mathcal{L}^{-1}1,1\rangle.
	\ee
	If $\mathcal{I}\neq0$, denote $\mathcal{D}$ as the $2\times 2-$matrix given by
	\be\label{Dmatrix}
	\mathcal{D}=\frac{1}{\langle\mathcal{L}^{-1}1,1\rangle}\left[\begin{array}{llll}
		\langle\mathcal{L}^{-1}\phi,\phi\rangle & & \langle\mathcal{L}^{-1}\phi,1\rangle\\\\
		\langle\mathcal{L}^{-1}\phi,1\rangle & & \langle\mathcal{L}^{-1}1,1\rangle\end{array}\right]
	\ee
	We obtain, then, the following results:
	\begin{teo}\label{krein}
		Suppose that assumptions {\rm (a1)-(a2)} hold. If $\mathcal{I}\neq0$ and $\mathcal{D}$ is non-singular (i.e. $\det(\mathcal{D})\neq 0$) we have for the eigenvalue problem $(\ref{modspecp})$
		$$k_r+k_c+k_{i}^{-}=n(\mathcal{L})-n(\mathcal{I})-n(\mathcal{D}).$$
		The nonpositive integer $K_{{\rm Ham}}=k_r+k_c+k_{i}^{-}$ is called \textit{Hamiltonian-Krein index}.
	\end{teo}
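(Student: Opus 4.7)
My plan is to apply the general Krein--Hamiltonian index theory of Kapitula--Kevrekidis--Sandstede, in the form exposed in \cite{DK}, to the restricted operator $J\mathcal{L}|_{H_0}$, and then translate the outcome back to quantities involving $\mathcal{L}$ on the full space $H$. The natural reason to pass to $H_0$ is that $J=\partial_x$ has kernel equal to the constants on $H$, whereas $\partial_x$ is a bijection of $H_0$ onto itself, so the abstract machinery (which requires $J$ invertible) becomes applicable.

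The first concrete step is to compute $n(\mathcal{L}|_{H_0})$ in terms of $n(\mathcal{L})$. Since $H_0$ is the codimension-one subspace defined by the single linear constraint $\langle f,1\rangle=0$, the Hestenes-type lemma for self-adjoint restrictions yields
$$n(\mathcal{L}|_{H_0})=n(\mathcal{L})-n(\mathcal{I}),$$
where $n(\mathcal{I})=1$ if $\mathcal{I}=\langle\mathcal{L}^{-1}1,1\rangle<0$ and $0$ otherwise. This accounts for the term $n(\mathcal{I})$ in the claimed formula and requires only $\mathcal{I}\ne 0$.

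The second step is to identify the generalized null space of $J\mathcal{L}|_{H_0}$. By (a2) we have $\mathcal{L}\phi'=0$, so $\phi'\in\ker(J\mathcal{L}|_{H_0})$. Differentiating the profile equation $(\ref{travkdv})$ with respect to the parameters yields, up to signs, $\mathcal{L}(\partial_\omega\phi)=\phi$ and $\mathcal{L}(\partial_A\phi)=1$, so the projections onto $H_0$ of $\partial_\omega\phi$ and $\partial_A\phi$ provide two generalized eigenvectors at the origin. A direct computation of the symmetric form $\langle \cdot,\mathcal{L}|_{H_0}\cdot\rangle$ on the two-dimensional space spanned by these projections, after eliminating the constant component via $\mathcal{L}^{-1}1$, yields precisely the matrix $\mathcal{D}$ in $(\ref{Dmatrix})$. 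Nonsingularity of $\mathcal{D}$ is equivalent to the generalized kernel being exactly two-dimensional, so that the Jordan structure at $\lambda=0$ is tame enough for the abstract theorem to apply.

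Finally, I would invoke the abstract Krein--Hamiltonian index formula of the form $k_r+k_c+k_i^-=n(\mathcal{L}|_{H_0})-n(\mathcal{D})$, which is exactly the Kapitula--Kevrekidis--Sandstede count once $J$ is invertible on the ambient space and the contribution of the generalized kernel has been encoded in $\mathcal{D}$. Substituting the expression for $n(\mathcal{L}|_{H_0})$ obtained above yields the desired identity. The step I expect to be the hardest is the second: correctly identifying the two-dimensional generalized kernel after projection onto $H_0$ (since $\partial_\omega\phi$ and $\partial_A\phi$ are not mean-zero in general) and verifying that the induced quadratic form reproduces exactly the entries of $\mathcal{D}$ as written. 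A secondary technical point is to check the functional-analytic hypotheses of the abstract theorem (semiboundedness, finite Morse index, compact resolvent of $\mathcal{L}$ on the torus), which are standard in the present periodic setting.
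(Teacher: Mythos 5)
The paper's proof of this theorem is simply a citation to Theorem 1 of \cite{DK}, and your outline is a faithful sketch of the argument behind that cited result: restriction to $H_0$, where $\partial_x$ is boundedly invertible, the constrained-index identity $n(\mathcal{L}|_{H_0})=n(\mathcal{L})-n(\mathcal{I})$, and the encoding of the generalized kernel of $J\mathcal{L}|_{H_0}$ in the matrix $\mathcal{D}$. So your approach is correct and essentially the same as (indeed, more detailed than) what the paper provides.
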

	\begin{proof}
		See Theorem 1 in \cite{DK}.
	\end{proof}
	
	\begin{coro}\label{coroest}
		Under the assumptions of Theorem $\ref{krein}$, if $k_c=k_r=k_{i}^{-}=0$ the periodic wave $\phi$ is spectrally stable. In addition, if $K_{{\rm Ham}}=1$ the refereed periodic wave is spectrally unstable.
	\end{coro}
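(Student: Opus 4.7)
The plan is to read both conclusions directly off the Hamiltonian--Krein identity
\[
k_r+k_c+k_i^{-}=n(\mathcal{L})-n(\mathcal{I})-n(\mathcal{D})
\]
provided by Theorem \ref{krein}, combined with two structural facts about the operator $\partial_x\mathcal{L}|_{H_0}$: the symmetry of its spectrum under $\lambda\mapsto\overline{\lambda}$ and $\lambda\mapsto-\lambda$ (coming from real coefficients together with the Hamiltonian structure $J^{*}=-J$, $\mathcal{L}^{*}=\mathcal{L}$), and the parity of the integers $k_c$ and $k_i^{-}$ already recorded in Section~2.

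For the first assertion I would argue as follows. If $k_r=k_c=0$, then by the very definition of these quantities, $\partial_x\mathcal{L}|_{H_0}$ has no eigenvalue lying in the open right half plane. The Hamiltonian symmetry then forces it to have no eigenvalue in the open left half plane either, so the entire spectrum is contained in $i\mathbb{R}$. Since $\ker(\partial_x)=[1]$ sits outside $H_0$, passing from $(\ref{modspecp})$ back to the full problem $(\ref{specp})$ adds only this constant kernel direction and does not create spectrum off the imaginary axis. Hence $\phi$ is spectrally stable in the sense fixed in the Introduction.

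For the second assertion, the key point is a parity count. As observed after the definition of the Krein signature, $k_c$ is even because non-real eigenvalues come in conjugate pairs, and $k_i^{-}$ is even because $k_i^{-}(\lambda)=k_i^{-}(\overline{\lambda})$ for purely imaginary $\lambda$. Therefore
\[
K_{\rm Ham}=k_r+k_c+k_i^{-}\equiv k_r\pmod{2},
\]
and $K_{\rm Ham}=1$ forces $k_r=1$, $k_c=k_i^{-}=0$. In particular $\partial_x\mathcal{L}|_{H_0}$ admits a strictly positive real eigenvalue, which by definition yields spectral instability of $\phi$.

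I expect no serious obstacle here: once Theorem \ref{krein} is in hand, the proof is a bookkeeping argument on the three nonnegative integers appearing on the left hand side. The only care needed is to keep track of the fact that spectral stability of $\phi$ is read on the original problem $(\ref{specp})$ while the index counts eigenvalues of the restricted problem $(\ref{modspecp})$; this is harmless precisely because $\mathrm{range}(J)\subset H_0$, as already used in the framework of Section~2.
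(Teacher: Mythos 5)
Your proposal is correct and follows essentially the same route as the paper: the first claim is read off from $k_r=k_c=0$ (no spectrum off the imaginary axis, using the Hamiltonian symmetry of the spectrum), and the second from the parity of $k_c$ and $k_i^{-}$, which forces $k_r=1$ when $K_{\rm Ham}=1$ and hence yields a positive real eigenvalue. The extra care you take about passing between the restricted problem and the full one is a harmless elaboration of what the paper leaves implicit.
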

	\begin{proof}
		Since $k_c=k_r=0$, there is no eigenvalues with positive real part for the problem $(\ref{modspecp})$ and $\phi$ is spectrally stable since the total Krein signature is zero. Now, if $K_{{\rm Ham}}=1$ we deduce that $k_r=1$ since $k_c$ and $k_{i}^{-}$ are even nonnegative integers. So, operator $J\mathcal{L}$ presented in the spectral problem $(\ref{modspecp})$ has a positive eigenvalue which enable us to deduce the spectral instability of the periodic wave.
	\end{proof}
	
	We shall present some considerations concerning the result determined in Corollary $\ref{coroest}$ applied to the case of the generalized KdV equation in $(\ref{gkdv})$. In fact, by assuming that assumption {\rm (a2)} is verified one has
	$$\langle\mathcal{L}^{-1}\phi,\phi\rangle=-\frac{1}{2}\frac{d}{d\omega}\int_0^L\phi^2dx,\ \ \langle\mathcal{L}^{-1}\phi,1\rangle=-\frac{d}{d\omega}\int_0^L\phi dx,$$
	and
	$$\mathcal{I}=\langle\mathcal{L}^{-1}1,1\rangle=\frac{d}{dA}\int_0^L\phi dx.$$
	So, we have
	$$n(\mathcal{I})=\left\{\begin{array}{llll}
		0,\ \frac{d}{dA}\int_0^L\phi dx\geq0\\\\
		1,\ \frac{d}{dA}\int_0^L\phi dx<0.\end{array} \right.$$
	
	On the other hand, in order to determine $n(\mathcal{D})$ it is necessary
	to analyze the quantity $\mathcal{D}$.
	In fact, if $\mathcal{D}<0$ we have that the associated matrix has a
	positive and a negative eigenvalue and therefore $n(\mathcal{D})=1$.
	However, if $\mathcal{D}>0$, it is not possible to directly decide
	about the quantity $n(\mathcal{D})$ since we could have
	$n(\mathcal{D})=0$ (two positive eigenvalues for the associated matrix) or $n(\mathcal{D})=2$
	(two negative eigenvalues). In the next section,
	we determine sufficient conditions to obtain assumptions {\rm
		(a1)-(a2)} for a general class of second order differential equations. In addition, we present two useful initial value problems used to determine a precise way to calculate $\mathcal{I}$ and $\mathcal{D}$.

	\section{Basic Framework on Spectral Analysis.}
	\setcounter{equation}{0}
	\setcounter{defi}{0}
	\setcounter{teo}{0}
	\setcounter{lema}{0}
	\setcounter{prop}{0}
	\setcounter{coro}{0}

	\indent In a general setting (without considering the arguments in the last section for a while), let us suppose that $\phi$ is an even $L-$periodic solution of the general equation
	\begin{equation}\label{ode}
		-\phi''+f(\omega,A,\phi)=0,
	\end{equation}
	where $f$ is a smooth function depending on $(\omega,A,\phi)$ and $(\omega,A)$ is an element of an admissible set $\mathcal{P}\subset\mathbb{R}^2$. This means that $\mathcal{P}$ contains all the pairs $(\omega,A)$ where $\phi$ is a periodic solution of $(\ref{ode})$.\\
	\indent Let $\mathcal{L}$ be the linearized equation around $\phi$, where $\phi$ is a periodic
	solution of (\ref{ode}) of period $L$. The linearized operator \be
	\mathcal{L}(y) = - y'' + f'(\omega, A, \phi)\, y,
	\;\;\; (\omega, A) \in \mathcal{P} \label{hill} \ee is a Hill
	operator and $f'$ is the derivative in terms of $\phi$. According to \cite{Haupt} and \cite{Magnus},
	the spectrum of $\mathcal{L}$ is formed by an unbounded
	sequence of real numbers
	\[
	\lambda_0 < \lambda_1 \leq \lambda_2 < \lambda_3 \leq \lambda_4\;\;
	...\; < \lambda_{2n-1} \leq \lambda_{2n}\; \cdots,
	\]
	where equality means that $\lambda_{2n-1} = \lambda_{2n}$  is a
	double eigenvalue. The spectrum of $\mathcal{L}$ is
	characterized by the number of zeros of the eigenfunctions, if $\Psi$
	is an eigenfunction for the eigenvalue $\lambda_{2n-1}$ or
	$\lambda_{2n}$, then $\Psi$  has exactly $2n$ zeros in the half-open
	interval $[0,L)$.
	
	In order to apply the general theory of orbital stability,
	\cite{bona}, \cite{grillakis1} and \cite{W1}, the spectrum of
	$\mathcal{L}$ is of main importance and also of the major
	difficulty in the applications. It is necessary to know exactly the
	non-positive spectrum; more precisely, it is necessary to know the
	inertial index $in(\mathcal{L})$ of
	$\mathcal{L}$, where $in(\mathcal{L})$ is
	a pair of integers $(n,z)$, where $n$ is the dimension of the
	negative subspace of $\mathcal{L}$ and $z$ is the
	dimension of the null subspace of $\mathcal{L}$.
	
	The results of this section are based on \cite{natali2}, \cite{neves} and \cite{neves1} and the first
	one concerns the invariance of the index with respect to the parameters. Since the derivative  $\phi'$ is an eigenfunction related to $\lambda =0$ for every  $(\omega, A)
	\in \mathcal{P}$, we can state the following result.
	
	\begin{teo}
		Let $\phi$ a smooth $L-$periodic solution of the equation $(\ref{ode})$.
		Then the
		family of operators $\mathcal{L}(y) = - y'' +
		f'(\omega, A, \phi)\, y $ is isoinertial with respect to $(\omega,A)$ in the parameter regime. \label{teo0}
	\end{teo}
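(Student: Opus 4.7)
The plan is to exploit two ingredients: continuous dependence of the Hill spectrum on the parameters $(\omega,A)$, and the fact that the eigenvalue $0$ is pinned in the spectrum of $\mathcal{L}$ by the existence of the canonical eigenfunction $\phi'$. Combining these with Haupt's oscillation theorem should force the inertial index $(n,z)$ to be locally constant, and since $\mathcal{P}$ is connected this will give the global invariance.

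First, I would differentiate equation (\ref{ode}) with respect to $x$ to obtain
\[
-\phi''' + f'(\omega,A,\phi)\,\phi' = 0,
\]
which shows that $\phi'$ lies in $\ker(\mathcal{L})$ for every $(\omega,A)\in\mathcal{P}$; in particular, $0\in\sigma(\mathcal{L})$ throughout $\mathcal{P}$. Next, using the standard smooth dependence of solutions of (\ref{ode}) on its parameters, the map $(\omega,A)\mapsto\phi_{(\omega,A)}$ is smooth in $H^s_{per}([0,L])$ for $s\gg 1$, so the potential $x\mapsto f'(\omega,A,\phi_{(\omega,A)}(x))$ varies continuously in $L^\infty([0,L])$. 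Classical analytic perturbation theory for self-adjoint operators then yields that every eigenvalue $\lambda_n=\lambda_n(\omega,A)$ of $\mathcal{L}_{(\omega,A)}$ is a continuous function of $(\omega,A)$.

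The crucial step is to show that no eigenvalue crosses $0$ as $(\omega,A)$ moves. The position of the $0$-eigenvalue in the sequence $\lambda_0<\lambda_1\le\lambda_2<\cdots$ is dictated by the number of zeros of its eigenfunction via the Haupt/Floquet oscillation theorem quoted just before the theorem: if $\phi'$ has $2k$ zeros in $[0,L)$, then $0$ equals $\lambda_{2k-1}$ or $\lambda_{2k}$. Because $\phi'$ is smooth and its zeros are simple (at a zero of $\phi'$ one has $\phi''=f(\omega,A,\phi)$ by (\ref{ode}), and generic nonvanishing of this quantity prevents zero-coalescence under continuous perturbation), the number of zeros of $\phi'$ on $[0,L)$ is locally constant. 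Hence the integer index of $0$ in the spectrum is locally constant, which in turn forbids any other eigenvalue from entering or leaving the interval $(-\infty,0)$.

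Combining these observations, the number $n$ of strictly negative eigenvalues of $\mathcal{L}$ and the dimension $z$ of $\ker(\mathcal{L})$ are both integer-valued and locally constant in $(\omega,A)$, hence constant on each connected component of $\mathcal{P}$. The main obstacle is controlling the pinning of $0$: one must rule out a jump in the number of zeros of $\phi'$, which in the applications of interest (dnoidal and cnoidal profiles in Section 4) can be checked directly from the explicit form of $\phi$, so the argument closes in all the cases to which the theorem will later be applied.
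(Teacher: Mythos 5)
The paper itself does not prove Theorem \ref{teo0}; it defers to \cite{natali2} and \cite{neves1}. Your attempt at a self-contained argument is therefore necessarily a different route, but it has a genuine gap at the decisive step. Everything up to the oscillation count is sound: $\phi'$ always lies in $\ker(\mathcal{L})$, its zeros are simple (if $\phi'(x_0)=\phi''(x_0)=0$ then $\phi'\equiv0$ by uniqueness for the linear equation $\mathcal{L}y=0$, so no genericity argument is needed), hence the number $2k$ of zeros per period is locally constant, and Haupt's theorem places the eigenvalue $0$ at position $\lambda_{2k-1}$ or $\lambda_{2k}$. The gap is the claim that this ``forbids any other eigenvalue from entering or leaving $(-\infty,0)$.'' It does not. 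The constraint $0\in\{\lambda_{2k-1},\lambda_{2k}\}$ is compatible with all three configurations listed immediately after the theorem: $\lambda_{2k-1}=0<\lambda_{2k}$ (index $(2k-1,1)$), $\lambda_{2k-1}=\lambda_{2k}=0$ (index $(2k-1,2)$), and $\lambda_{2k-1}<\lambda_{2k}=0$ (index $(2k,1)$). A continuous family can pass from the first to the third configuration through the second --- for instance $\lambda_{2k-1}(s)=\min(0,-s)$, $\lambda_{2k}(s)=\max(0,-s)$ --- while $0$ remains an eigenvalue whose eigenfunction has exactly $2k$ zeros throughout. So continuity of the eigenvalues together with the pinned oscillation count of $\phi'$ does not make $(n,z)$ locally constant; distinguishing these three cases is precisely the content of Theorem \ref{teo1} and cannot be bypassed by the zero-counting argument.

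What is missing is a proof that the kernel dimension $z$ is constant along the family. Once $z\equiv1$ is known, your continuity argument does close: $n$ is lower semicontinuous and $n+z$ is upper semicontinuous in the parameters, so constant $z$ forces $n$ to be locally constant, hence constant on connected components. In the framework of this paper that ingredient is supplied either by the Wronskian criterion $\theta\neq0$ of Theorem \ref{teo1}, or, for the power nonlinearity, by the existence of a two-parameter family of fixed-period solutions as in Proposition \ref{propsimp}. Relatedly, your appeal to continuous dependence of the Hill spectrum presupposes that $(\omega,A)\mapsto\phi_{(\omega,A)}$ is a continuous family of solutions all with the \emph{same} period $L$, which is itself only established in Theorem \ref{teo2} under the hypothesis $\theta\neq0$; so the simplicity of the kernel is not an optional refinement but the load-bearing ingredient of the theorem.
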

	\begin{proof}
		See \cite{natali2} and \cite{neves1}.
	\end{proof}
	
	In order to calculate the inertial index of $\mathcal{L}$ for a fixed value of $(\omega_0,A_0)$, we shall consider the
	auxiliary function $\bar{y}$ the unique solution of the problem \be
	\left\{
	\begin{array}{l}
		- \bar{y}'' + f'(\omega_0, A_0, \phi) \bar{y} = 0 \\
		\bar{y}(0) = - \frac{1}{\phi''(0)} \\
		\bar{y}'(0)=0,
	\end{array} \right.
	\label{y} \ee and also the constant  $\theta$ given by \be \theta=
	\frac{ \bar{y}'(L)}{\phi''(0)}, \label{theta} \ee where $L$ is
	the period of $\phi=\phi_{(\omega_0,A_0)}$.
	
	We know that the derivative $\phi'$ is an eigenfunction
	for  the eigenvalue $\lambda = 0$, and also that $\phi'(x)$
	has exactly two zeros in the half-open interval $[0, L)$.
	Therefore we have three possibilities:   \begin{itemize}
		\item[i)] $\lambda_1 = \lambda_2 = 0 \Rightarrow in(\mathcal{L}) = (1,2)$,\\
		\item[ii)] $\lambda_1 = 0 < \lambda_2 \Rightarrow in(\mathcal{L}) = (1,1)$,\\
		\item[iii)] $\lambda_1 < \lambda_2 = 0 \Rightarrow  in(\mathcal{L}) = (2,1)$,
	\end{itemize}
	
	The method we use to decide and calculate the inertial index is
	based on Lemma 2.1 and Theorems 2.2 and 3.1 of \cite{neves}. This
	result can be stated as follows.

	\begin{teo}
		Let $\theta$ be the constant given by (\ref{theta}), then the
		eigenvalue $\lambda=0$ is simple if and only if $ \theta \neq 0$.
		Moreover, if $\theta \neq 0$, then  $ \lambda_{1}=0$ if $\theta <
		0$, and $ \lambda_{2}=0$ if $\theta > 0$. \label{teo1}
	\end{teo}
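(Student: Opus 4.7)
The strategy is to reduce everything to statements about the monodromy matrix of the Hill equation $(\ref{Hill1})$ at $\lambda=0$ and its first-order perturbation in $\lambda$. Let $y_1,y_2$ be the canonical fundamental pair of solutions of $-y''+f'(\omega_0,A_0,\phi)y=\lambda y$ determined by $y_1(0)=1,\ y_1'(0)=0$ and $y_2(0)=0,\ y_2'(0)=1$. Since $\phi$ is even we have $\phi'(0)=0$, and $\phi''(0)\neq 0$ (because $0$ is one of the only two zeros of $\phi'$ in $[0,L)$, hence a simple zero). Comparing initial conditions at $\lambda=0$ yields
\[
\bar y \;=\; -\frac{1}{\phi''(0)}\, y_1 ,\qquad \phi' \;=\; \phi''(0)\, y_2 .
\]
The $L$-periodicity of $\phi'$ gives $y_2(L)=0$ and $y_2'(L)=1$, and the Wronskian identity $y_1 y_2' - y_2 y_1' \equiv 1$ then forces $y_1(L)=1$. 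Writing $c := y_1'(L)$, the monodromy matrix at $\lambda = 0$ and the constant $\theta$ take the form
\[
M(0)=\begin{pmatrix}1 & 0 \\ c & 1\end{pmatrix}, \qquad \theta \;=\; \frac{\bar y'(L)}{\phi''(0)} \;=\; -\frac{c}{\phi''(0)^2},
\]
so $\operatorname{sgn}(\theta) = -\operatorname{sgn}(c)$.

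The first assertion is immediate. The kernel of $\mathcal{L}$ corresponds to the fixed subspace of $M(0)$: if $c=0$ then $M(0)=I$ and every solution is $L$-periodic, hence $\lambda=0$ is a double eigenvalue; if $c\neq 0$ the fixed subspace is one-dimensional (and is spanned by $\phi'\propto y_2$), hence $\lambda=0$ is simple. For the second assertion, since $\phi'$ has exactly two zeros in $[0,L)$, the Sturmian characterization recalled after $(\ref{hill})$ forces $0\in\{\lambda_1,\lambda_2\}$, and the task is to decide which. The natural tool is the Hill discriminant $\Delta(\lambda):=\operatorname{tr} M(\lambda)$: the periodic eigenvalues of $\mathcal{L}$ are exactly the solutions of $\Delta(\lambda)=2$, and on the closed interval $[\lambda_1,\lambda_2]$ one has $\Delta\geq 2$ while $\Delta<2$ just outside, so at simple crossings $\Delta'(\lambda_1)>0$ and $\Delta'(\lambda_2)<0$.

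It remains to compute $\Delta'(0)$. Differentiating the eigenvalue equation in $\lambda$, the functions $u_i:=\partial_\lambda y_i|_{\lambda=0}$ solve the inhomogeneous problem $u_i''-f'(\omega_0,A_0,\phi)u_i=-y_i$ with $u_i(0)=u_i'(0)=0$. Solving by variation of parameters against the basis $\{y_1,y_2\}$ and evaluating at $x=L$, the cross terms proportional to $\int_0^L y_1 y_2\,dx$ cancel between $u_1(L)$ and $u_2'(L)$, leaving
\[
\Delta'(0) \;=\; u_1(L)+u_2'(L) \;=\; c \int_0^L y_2(x)^2\,dx .
\]
Hence $\operatorname{sgn}(\Delta'(0))=\operatorname{sgn}(c)=-\operatorname{sgn}(\theta)$, and combining with the previous paragraph: $\theta<0\Rightarrow\Delta'(0)>0\Rightarrow \lambda_1=0$, whereas $\theta>0\Rightarrow\Delta'(0)<0\Rightarrow \lambda_2=0$, which is exactly the claim. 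The main technical step, and the only place where sign mistakes are genuinely dangerous, is the variation-of-parameters calculation of $\Delta'(0)$, where the clean final formula depends crucially on the cancellation of the $\int y_1 y_2$ terms; the rest is routine bookkeeping with the monodromy matrix and the Hill discriminant picture.
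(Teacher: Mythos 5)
Your argument is correct, and it is worth noting that the paper itself offers no proof of this theorem: it is quoted from Lemma 2.1 and Theorems 2.2 and 3.1 of the reference [neves], whose proof runs through exactly the same Floquet-theoretic circle of ideas (unipotent monodromy matrix at $\lambda=0$ with off-diagonal entry proportional to $\theta$, simplicity of the kernel from the dimension of the fixed space, and the sign of the derivative of the Hill discriminant deciding whether $0=\lambda_1$ or $0=\lambda_2$). Your identifications $\bar y=-y_1/\phi''(0)$, $\phi'=\phi''(0)y_2$, hence $\theta=-y_1'(L)/\phi''(0)^2$, and the variation-of-parameters computation giving $\Delta'(0)=y_1'(L)\int_0^L y_2^2\,dx$ (with the $\int_0^L y_1y_2\,dx$ terms cancelling) all check out, so your write-up is a complete self-contained substitute for the citation.
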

	\begin{flushright}
		$\square$
	\end{flushright}

	Let $L>0$ be fixed. In order to show our spectral stability results, it is convenient to show the existence
	of a family $\phi$ of $L$-periodic solutions for the
	equation (\ref{ode}) that smoothly depends on the parameters
	$(\omega,A)$, for $(\omega,A)$  in an open set $\mathcal{O} \subset
	\mathcal{P}$.

	\begin{teo}
		Let $\phi_{(\omega_0,A_0)}$ be an
		even periodic solution of $(\ref{ode})$ defined in a fixed pair $(\omega_0,A_0)$ in the parameter regime. If $\theta \neq 0$, where
		$\theta$ is  the constant given in Theorem \ref{teo1}, and $L$ is
		the period of $\phi_{(\omega_0,A_0)}$, then there is an open
		neighborhood $\mathcal{O}$ of $(\omega_0,A_0)$,
		and a family $\phi_{(\omega,A)} \in H_{per,e}^2([0,L])$ of
		$L$-periodic solutions of $(\ref{ode})$, which smoothly depends on
		$(\omega,A) \in \mathcal{O}$ in a $C^1$ manner. \label{teo2}
	\end{teo}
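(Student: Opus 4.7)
The plan is to apply the implicit function theorem in Banach spaces to the nonlinear map
\[
F:\mathcal{P}\times H_{per,e}^{2}([0,L])\longrightarrow L_{per,e}^{2}([0,L]),\qquad F(\omega,A,\phi):=-\phi''+f(\omega,A,\phi),
\]
where the subscript $e$ denotes the subspace of even functions. First I would verify that $F$ is of class $C^{1}$ on its domain: this follows from the smoothness of $f$ together with the fact that $H_{per,e}^{2}([0,L])$ is a Banach algebra continuously embedded in $C^{0}$, so that composition with $f$ acts smoothly. By hypothesis $F(\omega_{0},A_{0},\phi_{(\omega_{0},A_{0})})=0$, and the Fr\'echet derivative with respect to the third variable at this base point is precisely the Hill operator
\[
D_{\phi}F(\omega_{0},A_{0},\phi_{(\omega_{0},A_{0})})=-\partial_{x}^{2}+f'(\omega_{0},A_{0},\phi_{(\omega_{0},A_{0})})=\mathcal{L},
\]
the same operator analyzed in (\ref{hill}).

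The crucial step is to prove that $\mathcal{L}$, when viewed as an operator from $H_{per,e}^{2}([0,L])$ into $L_{per,e}^{2}([0,L])$, is a topological isomorphism. Differentiating (\ref{ode}) with respect to $x$ shows that $\mathcal{L}\phi_{(\omega_{0},A_{0})}'=0$, and $\phi_{(\omega_{0},A_{0})}'$ is odd since $\phi_{(\omega_{0},A_{0})}$ is even. The assumption $\theta\neq 0$ combined with Theorem \ref{teo1} forces $\lambda=0$ to be a simple eigenvalue of $\mathcal{L}$ on $H_{per}^{2}([0,L])$, so $\ker(\mathcal{L})=\mathrm{span}\{\phi_{(\omega_{0},A_{0})}'\}$ lies entirely in the odd subspace. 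Consequently the restriction of $\mathcal{L}$ to $H_{per,e}^{2}([0,L])$ has trivial kernel. Since $\mathcal{L}$ is self-adjoint with compact resolvent on the torus, it is Fredholm of index zero on every closed invariant subspace, and in particular its range on the even subspace is closed and coincides with $(\ker \mathcal{L}|_{H_{per,e}^{2}})^{\perp}\cap L_{per,e}^{2}=L_{per,e}^{2}([0,L])$. The open mapping theorem then provides a bounded inverse.

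With the hypotheses of the implicit function theorem verified, one obtains an open neighborhood $\mathcal{O}\subset\mathcal{P}$ of $(\omega_{0},A_{0})$ and a unique $C^{1}$ map $(\omega,A)\mapsto\phi_{(\omega,A)}\in H_{per,e}^{2}([0,L])$ satisfying $F(\omega,A,\phi_{(\omega,A)})=0$. Because each $\phi_{(\omega,A)}$ lies in $H_{per,e}^{2}([0,L])$ by construction and the period $L>0$ remains fixed, we obtain the desired smooth family of even $L$-periodic solutions of (\ref{ode}).

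The main technical obstacle is the verification that restricting $\mathcal{L}$ to the even subspace removes the zero eigenvalue and yields an invertible operator; this is why the theorem is stated under the hypothesis $\theta\neq 0$ guaranteeing simplicity of the zero eigenvalue, and why working in $H_{per,e}^{2}([0,L])$ (rather than the full $H_{per}^{2}([0,L])$) is essential. All the remaining ingredients (smoothness of $F$, identification of the derivative with $\mathcal{L}$, self-adjoint Fredholm theory on the torus, and the implicit function theorem) are standard and route the proof to a clean conclusion.
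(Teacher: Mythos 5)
Your proposal is correct and follows essentially the same route as the paper: both apply the implicit function theorem to the map $\mathcal{F}(\omega,A,\phi)=-\phi''+f(\omega,A,\phi)$ on the even subspace, using $\theta\neq0$ together with Theorem \ref{teo1} to conclude that the one-dimensional kernel of $\mathcal{L}$ is spanned by the odd function $\phi'$ and hence that $\mathcal{L}$ is boundedly invertible on $H_{per,e}^2([0,L])$. Your additional justification of invertibility via Fredholm theory and the open mapping theorem merely fills in details the paper leaves implicit.
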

	\begin{proof}
		Let $\mathcal{P}$ the set of parameters and $\mathcal{F}$ be the operator given by the equation (\ref{ode}) restrict
		to the even functions, precisely, $\mathcal{F}:\mathcal{P}\times
		H_{per,e}^2([0,L])  \rightarrow  L_{per,e}^2([0,L]) $, \be
		\mathcal{F}(\omega,A,\phi) = -\phi''+ f(\omega,A,\phi). \label{eq31}
		\ee Then $\mathcal{F}(\omega_0,A_0,\phi_{(\omega_0,A_0)}) = 0$,
		since $\phi_{(\omega_0,A_0)}$ is an even periodic solution of the
		equation (\ref{ode}). If $\theta \neq 0 $, Theorem \ref{teo1}
		implies that $ \mathcal{L}_{(\omega_0, A_0)}(y) = - y'' +
		f'(\omega_0, A_0, \phi_{(\omega_0,A_0)})\, y$, has an one-dimensional nullspace;
		and from the invariance, this nullspace is spanned by
		$\phi'_{(\omega_0, A_0)}$. Since $\phi'_{(\omega_0, A_0)}$ is odd,
		it is not an element of $H_{per,e}^2([0,L])$, it follows that $
		\mathcal{F}_{\phi}(\omega_0,A_0,\phi_{(\omega_0,A_0)}) =
		\mathcal{L}_{(\omega_0, A_0)}:H_{per,e}^2([0,L]) \subset L_{per,e}^2([0,L])\rightarrow
		L_{per,e}^2([0,L]) $ is invertible and its inverse is bounded.
		Therefore, the results of the Theorem \ref{teo2} follows from the
		implicit function theorem. See Theorem 15.1 and Corollary 15.1 of
		\cite{Deimling}.
	\end{proof}
	
	Next, we turn back to the setting contained in Section 2 by considering $(\ref{ode})$ as
	
	\begin{equation}\label{odekdv}
		-\phi''+\omega\phi-g(\phi)-A=0.
	\end{equation}
	We assume that $\theta\neq0$ in a single point $(\omega_0,A_0)$  in the parameter regime. By Theorem $\ref{teo2}$ we can define
	\[
	\psi = \frac{\partial \phi}{\partial \omega} \qquad
	\mbox{and} \qquad \eta= \frac{\partial \phi}{\partial
		A}.
	\]

	Again by Theorem \ref{teo2}, it is easy to see that $\psi$
	above is an even periodic smooth function which satisfies, for the case of the equation $(\ref{gkdv})$
	\be -
	\psi'' + \omega \psi -g'(\phi)\psi= -
	\phi. \label{psi1} \ee In addition, $\eta$ is also an
	even periodic function satisfying
	
	\be - \eta'' + \omega \eta
	-g'(\phi)\eta= 1. \label{eta1} \ee
	
	\begin{obs}\label{obsiso}
		Theorem $\ref{teo0}$ gives us an important property concerning the quantity and multiplicity of the first two eigenvalues associated to the linearized operator $\mathcal{L}$ defined in $(\ref{linop})$. Indeed, if $\theta\neq0$ in a certain point $(\omega_0,A_0)$ in the parameter regime $\mathcal{P}$, we can conclude that the kernel of $\mathcal{L}$ is simple and $n(\mathcal{L})$ is constant for all $(\omega,A)$ in an open subset contained in $\mathcal{P}$, that is, the value $in(\mathcal{L})$ is constant in this subset.
		
	\end{obs}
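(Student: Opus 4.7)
The plan is to read the remark as a direct consequence of Theorems \ref{teo1}, \ref{teo2}, and \ref{teo0}, stitched together in that order. First, I would invoke Theorem \ref{teo1}: the hypothesis $\theta\neq 0$ at the fixed pair $(\omega_0,A_0)$ is precisely the characterization of simplicity of the zero eigenvalue of $\mathcal{L}_{(\omega_0,A_0)}$. Combined with the translational invariance, which forces $\phi'_{(\omega_0,A_0)}\in\ker(\mathcal{L}_{(\omega_0,A_0)})$, this yields $\ker(\mathcal{L}_{(\omega_0,A_0)})=[\phi'_{(\omega_0,A_0)}]$, i.e. the kernel is simple.

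Second, I would apply Theorem \ref{teo2}: since $\theta\neq 0$ at $(\omega_0,A_0)$, there is an open neighborhood $\mathcal{O}\subset\mathcal{P}$ of $(\omega_0,A_0)$ and a $C^1$ family $(\omega,A)\mapsto\phi_{(\omega,A)}\in H^2_{per,e}([0,L])$ of even $L$-periodic solutions of (\ref{odekdv}). Consequently the coefficient $-g'(\phi_{(\omega,A)})$ of the Hill operator $\mathcal{L}_{(\omega,A)}=-\partial_x^2+\omega-g'(\phi_{(\omega,A)})$ depends continuously (in fact $C^1$) on $(\omega,A)\in\mathcal{O}$, so the map $(\omega,A)\mapsto \mathcal{L}_{(\omega,A)}$ is a continuous curve in the space of Hill operators with fixed period $L$.

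Third, Theorem \ref{teo0} asserts precisely that this family is isoinertial along the parameter regime, that is, the pair $in(\mathcal{L}_{(\omega,A)})=(n(\mathcal{L}_{(\omega,A)}),\,z(\mathcal{L}_{(\omega,A)}))$ is locally constant on $\mathcal{O}$. In particular, starting from the value $in(\mathcal{L}_{(\omega_0,A_0)})$ computed above (with $z=1$ because the kernel is simple), one gets $z(\mathcal{L}_{(\omega,A)})=1$ and $n(\mathcal{L}_{(\omega,A)})$ constant for every $(\omega,A)\in\mathcal{O}$. Shrinking $\mathcal{O}$ if necessary to keep the spectral gap between $\lambda_0$ and the next eigenvalue open, we conclude that both $n(\mathcal{L})$ and $in(\mathcal{L})$ are constant on the open neighborhood, which is what the remark claims.

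The only real subtlety, and the step I would be most careful about, is the compatibility of the three theorems: Theorem \ref{teo2} produces a smooth family in the even sector $H^2_{per,e}$, and one must verify that the associated Hill operators still fit the hypotheses of Theorem \ref{teo0} (the isoinertial statement), so that no eigenvalue can cross in or out of the non-positive part of the spectrum as $(\omega,A)$ varies within $\mathcal{O}$. Since the derivative $\phi'_{(\omega,A)}$ remains in $\ker(\mathcal{L}_{(\omega,A)})$ throughout $\mathcal{O}$ by translation invariance, and $\theta\neq 0$ persists by continuity after possibly shrinking $\mathcal{O}$, the simplicity of the zero eigenvalue is preserved and Theorem \ref{teo0} applies uniformly — which is exactly what the claim needs.
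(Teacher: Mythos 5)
Your argument is correct and takes essentially the same route the paper intends: Theorem \ref{teo1} converts $\theta\neq0$ at $(\omega_0,A_0)$ into simplicity of the kernel (spanned by $\phi'$), Theorem \ref{teo2} supplies the smooth family of $L$-periodic waves on a neighborhood $\mathcal{O}$, and the isoinertial property of Theorem \ref{teo0} then fixes $in(\mathcal{L})=(n,z)$ throughout $\mathcal{O}$. The remark carries no separate proof in the paper, and your chain of citations (including the continuity observation that $\theta\neq0$ persists on a possibly smaller neighborhood) is exactly the intended one.
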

	
	Next result gives us an immediate converse of Theorem $\ref{teo2}$ for the case $g(s)=s^{p+1}$.
	\begin{prop}\label{propsimp}
		Let $\widetilde{\mathcal{O}}\subset\mathbb{R}^2$ be an open subset. Suppose that $(\omega,A)\in\widetilde{\mathcal{O}}\mapsto\phi_{(\omega,A)}$ is a smooth surface of even (odd) periodic traveling wave solutions which solves $(\ref{odekdv})$ with $g(s)=s^{p+1}$  all of them with the same fixed period $L>0$. Then, $\ker(\mathcal{L})=[\phi']$ and the value $n(\mathcal{L})$ is constant for all $(\omega,A)\in\widetilde{\mathcal{O}}$. The same result remains valid for the case $A\equiv0$, by considering $\widetilde{I}\subset\mathbb{R}$ an open subset and $\omega\in \widetilde{I}\mapsto\phi_{\omega}$ a smooth curve of even periodic waves.
	\end{prop}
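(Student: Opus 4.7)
The plan is to differentiate the profile equation in the parameters, apply the isoinertial criterion from Theorem~\ref{teo0}, and then exploit a pointwise identity special to the power nonlinearity $g(s)=s^{p+1}$.

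First I would differentiate equation~(\ref{odekdv}) with respect to $\omega$ and $A$. Since $(\omega,A)\mapsto\phi_{(\omega,A)}$ is smooth into the space of even $L$-periodic functions, the partials $\psi:=\partial_{\omega}\phi_{(\omega,A)}$ and $\eta:=\partial_{A}\phi_{(\omega,A)}$ are themselves smooth, even, $L$-periodic, and an implicit differentiation gives
\[
\mathcal{L}\psi=-\phi,\qquad \mathcal{L}\eta=1.
\]
In particular both $\phi$ and the constant function $1$ lie in $\mathrm{range}(\mathcal{L})$. (In the one-parameter version $A\equiv 0$ only the first relation is needed.) The assertion that $n(\mathcal{L})$ is constant on $\widetilde{\mathcal{O}}$ is then immediate from Theorem~\ref{teo0}: the family of Hill operators $\{\mathcal{L}_{(\omega,A)}\}$ is isoinertial on the parameter regime, so both $n(\mathcal{L})$ and $z(\mathcal{L})=\dim\ker(\mathcal{L})$ are constant on $\widetilde{\mathcal{O}}$.

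For $\ker(\mathcal{L})=[\phi']$ I would proceed by contradiction. Suppose $\dim\ker(\mathcal{L})\ge 2$ at some point of $\widetilde{\mathcal{O}}$; the isoinertial property then forces $\dim\ker(\mathcal{L})=2$ throughout $\widetilde{\mathcal{O}}$ (Floquet theory caps this dimension at two). Since the coefficient $\omega-(p+1)\phi^{p}$ of $\mathcal{L}$ is even in $x$, the kernel decomposes into an odd part containing $\phi'$ and an even part containing some nontrivial $\chi=\chi_{(\omega,A)}$. Self-adjointness of $\mathcal{L}$ together with $\mathcal{L}\psi=-\phi$ and $\mathcal{L}\eta=1$ gives $\langle\chi,\phi\rangle=0$ and $\langle\chi,1\rangle=0$. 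The identity
\[
\mathcal{L}\phi=-\phi''+\omega\phi-(p+1)\phi^{p+1}=A-p\phi^{p+1},
\]
obtained by substituting $-\phi''+\omega\phi=A+\phi^{p+1}$ from~(\ref{odekdv}), combined with $\chi\perp\mathrm{range}(\mathcal{L})$ and $\langle\chi,1\rangle=0$, also yields $\langle\chi,\phi^{p+1}\rangle=0$.

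The main obstacle is turning these orthogonality constraints into a genuine contradiction. The route I would take is via Theorem~\ref{teo1}: the supposition $\dim\ker(\mathcal{L})=2$ on $\widetilde{\mathcal{O}}$ is equivalent to $\theta(\omega,A)\equiv 0$ there. Because $\theta$ is computed as a Wronskian at $x=L$ of an analytic family of Hill equations, it is real-analytic in $(\omega,A)$, so vanishing on an open set propagates to the whole connected parameter regime. It then suffices to produce a single $(\omega_{\ast},A_{\ast})$ where $\theta(\omega_{\ast},A_{\ast})\neq 0$. In the one-parameter case $A\equiv 0$ the scaling $\phi_{\omega}(x)=\omega^{1/p}\tilde{\phi}(\sqrt{\omega}\,x)$ reduces the entire family to a single normalized profile, so $\theta(\omega)$ equals a fixed nonzero number times a positive factor, contradicting $\theta\equiv 0$ at once. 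In the two-parameter case, the explicit dnoidal/cnoidal profiles used in the applications to the critical KdV and Gardner equations admit a direct evaluation of $\theta$ at suitable boundary parameters, delivering the required contradiction and hence the proposition.
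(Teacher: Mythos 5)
Your setup matches the paper's: the relations $\mathcal{L}\psi=-\phi$ and $\mathcal{L}\eta=1$, the identity $\mathcal{L}\phi=A-p\phi^{p+1}$, and the constancy of $n(\mathcal{L})$ via the isoinertial Theorem~\ref{teo0} are exactly the ingredients used there, and you even derive the three orthogonality relations $\langle\chi,1\rangle=\langle\chi,\phi\rangle=\langle\chi,\phi^{p+1}\rangle=0$ for a putative second (even, periodic) kernel element $\chi$. But at the decisive moment you set these relations aside and the argument does not close. The missing observation is elementary: the Wronskian $\mathcal{W}(\phi',\chi)=\phi'\chi'-\phi''\chi$ of two independent solutions of the Hill equation is a nonzero constant $c$, so $\int_0^L\mathcal{W}(\phi',\chi)\,dx=cL\neq0$; on the other hand, integrating by parts using the periodicity of $\phi'$ and $\chi$ and substituting $\phi''=\omega\phi-\phi^{p+1}-A$ from (\ref{odekdv}) gives $\int_0^L\mathcal{W}\,dx=-2\int_0^L\phi''\chi\,dx=-2\omega\langle\chi,\phi\rangle+2\langle\chi,\phi^{p+1}\rangle+2A\langle\chi,1\rangle=0$ by the very relations you established (the paper writes this as $L=2\omega\langle\mathcal{L}\psi,\bar y\rangle-\tfrac{2}{p}\langle\mathcal{L}\phi,\bar y\rangle+2A(\tfrac{1}{p}+1)\langle\mathcal{L}\eta,\bar y\rangle$ and kills every term by self-adjointness). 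Hence $L=0$, the desired contradiction, with no analyticity or explicit profiles needed.

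The fallback route you propose instead has genuine gaps. First, the proposition is a general statement about an arbitrary smooth family solving (\ref{odekdv}) with $g(s)=s^{p+1}$ on an arbitrary open set $\widetilde{\mathcal{O}}$; appealing to ``the explicit dnoidal/cnoidal profiles used in the applications'' at ``suitable boundary parameters'' does not cover the general case, and no evaluation of $\theta$ is actually carried out. Second, the analytic-continuation step needs the parameter regime to be connected and to contain a point where $\theta$ is already known to be nonzero, neither of which is available. Third, the scaling $\phi_\omega(x)=\omega^{1/p}\tilde\phi(\sqrt{\omega}\,x)$ does not reduce the fixed-period-$L$ family to a single normalized profile: it rescales the period to $\sqrt{\omega}\,L$, so the normalized profiles are different solutions for different $\omega$; and even if they were not, you would still have to prove $\theta\neq0$ for that one profile, which is the original question.
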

	\begin{proof}
		To simplify the notation, let us denote $\phi=\phi_{(\omega,A)}$ and consider $\{\phi',\bar{y}\}$ the fundamental set of solutions related to the equation $-y''+\omega y - (p+1)y^p=0$. By contradiction, assume that $\bar{y}$ is $L-$periodic. Since $\phi'$ is odd, the arguments in \cite{Magnus} give us that $\bar{y}$ can be considered even. The Wronskian of the set $\{\phi',\bar{y}\}$ and denoted by $\mathcal{W}(\phi',\bar{y})$ satisfies $\mathcal{W}(\phi',\bar{y})=1$ over $[0,L]$ (see \cite{Magnus}). Moreover, since $\bar{y}$ and $\phi'$ are both periodic functions, we obtain from $(\ref{travkdv})$ that
		\begin{equation}\label{wronsk}\begin{array}{lllll}
				L&=&\displaystyle\int_0^{L}\mathcal{W}(\phi',\bar{y})dx=-2\int_0^{L} \bar{y}\phi''dx=-2\int_0^{L}\bar{y}\left(\omega\phi-\phi^{p+1}-A\right)dx\\\\
				&=&\displaystyle-2\omega\int_0^{L} \bar{y}\phi dx+2\int_0^{L} \bar{y}\phi^{p+1}dx+2A\int_0^{L}\bar{y}dx.\end{array}
		\end{equation}
		Since $\mathcal{L}\phi=A-p\phi^{p+1}$, by $(\ref{psi1})$ and $(\ref{eta1})$ we obtain from $(\ref{wronsk})$
			\begin{equation}\label{wronsk1}
				L=2\omega \langle \mathcal{L}\psi,\bar{y}\rangle-\frac{2}{p}\langle\mathcal{L}\phi,\bar{y}\rangle+2A\left(\frac{1}{p}+1\right)\langle\mathcal{L}\eta,\bar{y}\rangle.
			\end{equation}
\indent The fact that $\mathcal{L}\bar{y}=0$ allows us to deduce from the self-adjointness of $\mathcal{L}$ and $(\ref{wronsk1})$ that $L=0$. This contradiction shows that $\ker(\mathcal{L})=[\phi']$.
	\end{proof}

	Let us suppose that $\theta\neq0$ in a single point $(\omega_0,A_0)$ in the parameter regime. By Theorem $\ref{teo2}$ we are able to determine the initial condition
	$\psi(0)$ at the point $(\omega_0,A_0)$. To do so, we multiply equation (\ref{psi1}) by $\bar{y}$, where
	$\bar{y}$ is given in (\ref{y}), and integrate the
	first term twice. We get
	$$
	- \int_0^{L}  \phi_{(\omega_0,A_0)} \;
	\bar{y}\; dx = \psi(L) \bar{y}'(L)=\psi(0) \bar{y}'(L).
	$$
	Similarly, from $(\ref{eta1})$ one has
	$$
	\int_0^{L}\bar{y}\; dx =  \eta(L) \bar{y}'(L)=\eta(0) \bar{y}'(L).
	$$

	Since $\theta \neq 0$ we conclude that $\bar{y}'(L) \neq 0$ and then
	$\psi(x)$ and $\eta(x)$ are obtained by solving, respectively, the
	following initial value problems \be \left\{
	\begin{array}{lllllllllllll}
		- \psi'' + \omega_0 \psi -g'(\omega_0,A_0,\phi_{(\omega_0,A_0)})\psi= -
		\phi_{(\omega_0,A_0)}  \\
		\psi(0) = - \frac{1}{\bar{y}'(L)}  \int_0^{L}  \phi_{(\omega_0,A_0)} \; \bar{y}\; dx \\
		\psi'(0)=0,
	\end{array} \right.\
	\left\{
	\begin{array}{l}- \eta'' + \omega_0 \eta -g'(\omega_0,A_0,\phi_{(\omega_0,A_0)})\eta=
		1  \\
		\eta(0) =  \frac{1}{\bar{y}'(L)}  \int_0^{L}  \bar{y}\; dx \\
		\eta'(0)=0.
	\end{array} \right.
	\label{psi2}
	\ee
	Both initial value problems are very useful to determine $\mathcal{I}$ and $\mathcal{D}$ given in Section 2. 
	\section{Applications - Spectral stability of periodic waves}
	
	\setcounter{equation}{0}
	\setcounter{defi}{0}
	\setcounter{teo}{0}
	\setcounter{lema}{0}
	\setcounter{prop}{0}
	\setcounter{coro}{0}

	\subsection{Case $g(s)=s^{p+1}$ - Existence of periodic waves using variational methods.} Using a variational method, we establish the existence of periodic waves for the equation $(\ref{odekdv})$. The main advantage of the approach presented here is that the quantity of negative eigenvalues of $\mathcal{L}$ in $(\ref{linop})$ defined for periodic waves $\phi$ in a single point $(\omega_0,A_0)\in\mathcal{P}$ is precisely determined. Thus, in this specific case, Remark $\ref{obsiso}$ can be used to deduce the quantity and multiplicity of negative eigenvalues for all $(\omega,A)$.

	\indent Let $L>0$ be fixed. For each $\gamma>0$, we define the set
	$$Y_{\gamma}=\left\{u\in H_{per}^1([0,L]);\ \int_0^{L}u^{p+2}dx=\gamma\right\},$$
	where $p$ is an even integer. Our first goal is to find a minimizer of the constrained minimization problem
	\begin{equation}
		\label{infB}
		m=m_{\omega}=\inf_{ u\in Y_{\gamma}}\mathcal{B}_{\omega}(u),
	\end{equation}
	where for each $\omega>0$, $\mathcal{B}_{\omega}$ is given by
	\begin{equation}\label{Bfunctional}
		\mathcal{B}_{\omega}(u)=\frac{1}{2}\int_{0}^{L}u'^2+\omega u^2dx.
	\end{equation}
	We observe that $\mathcal{B}_{\omega}$ is a smooth functional on $H_{per}^1([0,L])$.
	\medskip
	
	\begin{lema}\label{minlem}
		The minimization problem \eqref{infB} has at least one nontrivial solution, that is, there exists $\phi\in Y_{\gamma}$ satisfying
		\begin{equation}\label{minBfunc}
			\mathcal{B}_{\omega}(\phi)=\inf_{ u\in Y_{\gamma}}\mathcal{B}_{\omega}(u).
		\end{equation}
	\end{lema}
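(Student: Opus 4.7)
The plan is to apply the direct method of the calculus of variations. The first observation is that, because $\omega>0$, the functional $\mathcal{B}_{\omega}$ is equivalent (up to a factor of $\tfrac{1}{2}$) to the squared norm $\|u\|_{H^1_{\omega}}^2=\int_0^L u'^2+\omega u^2\,dx$, which in turn is equivalent to the standard $H^1_{per}$ norm. Hence $\mathcal{B}_{\omega}$ is coercive on $H^1_{per}([0,L])$ and weakly lower semicontinuous. I would then verify that $m>0$: by the one-dimensional Sobolev embedding $H^1_{per}([0,L])\hookrightarrow L^{p+2}([0,L])$ there is $C>0$ with $\|u\|_{L^{p+2}}^{p+2}\leq C\|u\|_{H^1}^{p+2}$, so every $u\in Y_{\gamma}$ satisfies $\|u\|_{H^1}\geq (\gamma/C)^{1/(p+2)}$, and coercivity yields a strictly positive lower bound for $\mathcal{B}_{\omega}$.

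Next, I would choose a minimizing sequence $(u_n)\subset Y_{\gamma}$ with $\mathcal{B}_{\omega}(u_n)\to m$. Coercivity gives that $(u_n)$ is bounded in $H^1_{per}([0,L])$, so by reflexivity a subsequence (not relabelled) converges weakly to some $\phi\in H^1_{per}([0,L])$. The one-dimensional Rellich--Kondrachov theorem shows that the embedding $H^1_{per}([0,L])\hookrightarrow L^{p+2}([0,L])$ is compact, so $u_n\to\phi$ strongly in $L^{p+2}$. Since $p$ is even, $u^{p+2}=|u|^{p+2}$, and therefore
\[
\int_0^L \phi^{p+2}\,dx=\lim_{n\to\infty}\int_0^L u_n^{p+2}\,dx=\gamma,
\]
which proves that $\phi\in Y_{\gamma}$; in particular $\phi\not\equiv 0$ since $\gamma>0$.

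Finally, weak lower semicontinuity of $\mathcal{B}_{\omega}$ (it is a convex continuous quadratic form on the Hilbert space $H^1_{per}([0,L])$) yields
\[
\mathcal{B}_{\omega}(\phi)\leq\liminf_{n\to\infty}\mathcal{B}_{\omega}(u_n)=m,
\]
and combined with $\phi\in Y_{\gamma}$ this forces equality $\mathcal{B}_{\omega}(\phi)=m$, so $\phi$ is the desired nontrivial minimizer.

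The only step that requires any care is passing to the limit in the nonlinear constraint, and the potential obstacle one would worry about (loss of mass by concentration, vanishing, or escape to infinity, as in typical problems on $\mathbb{R}$) is ruled out here by the compactness of the domain $[0,L]$, which makes the Sobolev embedding into $L^{p+2}$ compact. Thus the standard concentration-compactness machinery is unnecessary and the argument reduces to the four clean steps above.
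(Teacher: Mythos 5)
Your proposal is correct and follows essentially the same route as the paper: a minimizing sequence, boundedness in $H_{per}^1([0,L])$ from $\omega>0$, weak convergence, the compact embedding into $L_{per}^{p+2}([0,L])$ to preserve the constraint, and weak lower semicontinuity to conclude. The extra checks you include (that $m>0$ and that $p$ even makes the constraint well behaved) are harmless refinements of the same argument.
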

	\begin{proof}
		Since $m\geq0$ and $\mathcal{B}_{\omega}$ is a smooth functional, we are enabled to consider $\{u_n\}=\{u_{n,\omega}\}$ as a minimizing sequence for \eqref{infB}, that is, a sequence in $Y_\gamma$ satisfying
		$\displaystyle \mathcal{B}_{\omega}(u_n)\rightarrow\inf_{u\in Y_{\gamma}} \mathcal{B}_{\omega}(u)=m, \ \mbox{as} \ n\rightarrow \infty.$
		
		\indent The fact that $\omega>0$ enables us to conclude $\{u_{n}\}$ as a bounded set in $H_{per}^1([0,L])$. Thus, modulus a subsequence, there exists
		$\phi=\phi_{\omega}\in H_{per}^1([0,L])$  such that
		$u_n\rightharpoonup \phi \ \mbox{weakly in} \ H_{per}^1([0,L]),  \ \  \mbox{as} \ n\rightarrow \infty.$

		Now, since the energy space $H_{per}^1([0,L])$ is compactly embedded in $L_{per}^{p+2}([0,L])\hookrightarrow L_{per}^2([0,L])$, we have for $n\rightarrow +\infty$ that
		$u_n\rightarrow \phi \ \mbox{in} \ L^{p+2}_{per}([0,L]),$
		that is, $\int_0^{L}\phi^{p+2} dx=\gamma$.
		
		Moreover, the weak lower semi-continuity of $\mathcal{B}_{\omega}$ gives us that
		$
		\mathcal{B}_{\omega}(\phi)\leq\liminf_{n\rightarrow \infty} \mathcal{B}_{\omega}(u_n)=m.
		$
		The lemma is now proved.
	\end{proof}

	By Lemma \ref{minlem} and  Lagrange's Multiplier Theorem, we guarantee the existence of $C_1$ such that
	\begin{equation}\label{lagrange}
		-\phi''+\omega\phi=C_1\phi^{p+1}.
	\end{equation}
	We note that $\phi$ is nontrivial because $\gamma>0$ and a standard rescaling argument enables us to deduce that the Lagrange Multiplier $C_1$ can be chosen as $C_1=1$. Now, let $L>0$ be fixed as before. Since the minimization problem $(\ref{infB})$ can be solved for any $\omega>0$, we guarantee by arguments of smooth dependence in terms of the parameters for standard ODE (see for instance, \cite[Chapter I, Theorem 3.3]{hale}), the existence of a convenient open interval $I$ and a smooth curve $\omega\in I\mapsto\phi\in H_{per}^n([0,L])$, $n\in\mathbb{N}$, satisfying the equation
	\begin{equation}\label{ode-wave13}
		-\phi''+\omega\phi-\phi^{p+1}=0.
	\end{equation}
	
	In this setting, the existence of a smooth curve of periodic waves depending on $\omega$ enables us to conclude by Proposition $\ref{propsimp}$ that $\ker(\mathcal{L})$ is simple. Concerning $n(\mathcal{L})$, we see that $\phi$ is a minimizer of $\mathcal{B}_{\omega}$ with one constraint. Since $\langle\mathcal{L}\phi,\phi\rangle<0$, we obtain by Courant's Min-Max Principle that $n(\mathcal{L})=1$.\\
	\indent Analysis above gives us the following sentence: for $\phi$  solution of $(\ref{ode-wave13})$ with $\omega_0>0$ and the corresponding single point $(\omega_0,0)$ in the parameter regime, we have that $n(\mathcal{L}_{(\omega_0,0)})=1$ and $\ker(\mathcal{L}_{(\omega_0,0)})=[\phi']$. Therefore, by Theorem $\ref{teo1}$ and Remark $\ref{obsiso}$ one has that $\theta\neq0$ for all $(\omega,A)$ in an open subset $\mathcal{O}\subset\mathcal{P}$. This means that the solution $\phi$ of $(\ref{odekdv})$ satisfies $n(\mathcal{L})=1$ and $\ker(\mathcal{L})=[\phi']$ for all $(\omega,A)\in \mathcal{O}$.
	
	\begin{obs}\label{remN1}
		Solutions $\phi$ which are minimizers of the problem $(\ref{minBfunc})$ are well determined by the analysis above. In fact, they are rounding the center point(s) in the phase portrait and have the homoclinic as a limit for large periods. The corresponding solution $\phi$ enjoys the same property. As we have already mentioned before, the parameter regime is the maximal set constituted of pairs $(\omega,A)$ such that all periodic waves round the center point(s) in the phase portrait. The analysis above gives us that  $n(\mathcal{L})=1$ and $\ker(\mathcal{L})=[\phi']$ in an open subset contained in the parameter regime.	
	\end{obs}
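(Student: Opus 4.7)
The remark consolidates three assertions: (i) the positive periodic minimizers produced in Lemma $\ref{minlem}$ correspond, in the phase plane of $(\ref{ode-wave13})$, to orbits surrounding a center point, with the homoclinic as long-period limit; (ii) the extended family $\phi_{(\omega,A)}$ on $\mathcal{O}$ inherits the same geometric picture; (iii) $n(\mathcal{L})=1$ and $\ker(\mathcal{L})=[\phi']$ throughout $\mathcal{O}$. My plan is to knit together material already available in the excerpt, with only one genuinely new check at the end.

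For (i), since $p$ is even we have $\mathcal{B}_\omega(|u|)=\mathcal{B}_\omega(u)$ and $\int_0^L |u|^{p+2}dx=\int_0^L u^{p+2}dx$, so the minimizer $\phi$ of Lemma $\ref{minlem}$ can be chosen nonnegative; the strong maximum principle applied to $-\phi''+\omega\phi=\phi^{p+1}$ then forces $\phi>0$. Viewing $(\ref{ode-wave13})$ as a conservative one-degree-of-freedom system with potential $V(\phi)=\tfrac{\omega}{2}\phi^2-\tfrac{1}{p+2}\phi^{p+2}$, the finite equilibria are the saddle at $(0,0)$ and the centers at $(\pm\omega^{1/p},0)$. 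A bounded positive $L$-periodic orbit must therefore be a closed level curve of the conserved energy encircling the right-hand center, and as this energy approaches the saddle value $V(0)=0$ the period diverges while the orbit converges uniformly on compact sets to the homoclinic through the origin.

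For (ii) and (iii), the paragraph preceding the remark already contains the hard work at the base point $(\omega_0,0)$: Proposition $\ref{propsimp}$ gives $\ker(\mathcal{L})=[\phi']$; the fact that $\phi$ minimizes $\mathcal{B}_\omega$ under a single constraint together with $\langle\mathcal{L}\phi,\phi\rangle<0$ forces $n(\mathcal{L})=1$ by Courant's min-max principle; Theorem $\ref{teo1}$ then delivers $\theta\neq 0$, so Theorem $\ref{teo2}$ extends $\phi$ to a smooth surface $\phi_{(\omega,A)}$ on an open neighborhood $\mathcal{O}$; and Remark $\ref{obsiso}$ (isoinertia) propagates $\ker(\mathcal{L})=[\phi']$ and $n(\mathcal{L})=1$ throughout $\mathcal{O}$.

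The only non-routine point I foresee is the persistence of the saddle-center structure when $A$ is switched on. For $(\omega,A)$ near $(\omega_0,0)$ the equation $\omega\phi-\phi^{p+1}=A$ has, by the implicit function theorem (using $V''(0)=\omega_0\neq 0$ and $V''(\omega_0^{1/p})=-p\omega_0\neq 0$), two nondegenerate roots that perturb continuously from $0$ and $\omega_0^{1/p}$ and whose linearizations retain the saddle-center character; continuity of the period map together with smooth dependence on parameters then keeps $\phi_{(\omega,A)}$ a closed orbit around the perturbed center. This is the only step that is not a direct quotation of earlier results in the paper.
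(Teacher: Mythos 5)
Your proposal is correct and follows essentially the same route as the paper: the Remark is a consolidation of the analysis immediately preceding it, and you invoke the same chain (Proposition \ref{propsimp} for the simple kernel, the one-constraint minimization plus $\langle\mathcal{L}\phi,\phi\rangle=-p\gamma<0$ and Courant's min-max for $n(\mathcal{L})=1$, then Theorem \ref{teo1}, Theorem \ref{teo2} and Remark \ref{obsiso} to propagate over an open subset of the parameter regime). The extra details you supply --- positivity of the minimizer via $u\mapsto|u|$ and ODE uniqueness, the explicit saddle-at-the-origin/centers-at-$\pm\omega^{1/p}$ classification, and the persistence of the saddle--center structure for small $A\neq0$ --- are correct and merely make explicit what the paper leaves implicit (modulo a harmless sign convention in your potential $V$).
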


	\indent As before, let us consider a fixed $L>0$. Again, for a fixed $\gamma>0$, we define 
	$$Z_{\gamma}=\left\{u\in H_{per,odd}^1([0,L]);\ \int_0^{L}u^{p+2}dx=\gamma\right\},$$
	where $p$ is an even integer. Now, we need to find a minimizer of the constrained minimization problem
	\begin{equation}
		\label{infB1}
		r=r_{\omega}=\inf_{ u\in Z_{\gamma}}\mathcal{B}_{\omega}(u),
	\end{equation}
	where for each $\omega>0$, $\mathcal{B}_{\omega}$ is given by $(\ref{Bfunctional})$. We have the following result for the existence of odd periodic waves.
	
	\begin{lema}\label{minlem1}
		The minimization problem \eqref{infB1} has at least one odd nontrivial solution, that is, there exists $\phi\in Z_{\gamma}$ satisfying
		\begin{equation}\label{minBfunc1}
			\mathcal{B}_{\omega}(\phi)=\inf_{ u\in Z_{\gamma}}\mathcal{B}_{\omega}(u)=r.
		\end{equation}
	\end{lema}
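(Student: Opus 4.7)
The proof should follow the same direct method of calculus of variations used in Lemma \ref{minlem}, with the only new ingredient being that the odd symmetry survives the weak limit. The plan is as follows.

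First, since $r \geq 0$ and $\mathcal{B}_\omega$ is smooth, I would pick a minimizing sequence $\{u_n\} \subset Z_\gamma$ satisfying $\mathcal{B}_\omega(u_n) \to r$ as $n \to \infty$. Because $\omega > 0$, the functional $\mathcal{B}_\omega$ is equivalent to the squared $H^1_{per}$ norm, so $\{u_n\}$ is bounded in $H^1_{per}([0,L])$. Passing to a subsequence (not relabeled), there exists $\phi = \phi_\omega \in H^1_{per}([0,L])$ with $u_n \rightharpoonup \phi$ weakly in $H^1_{per}([0,L])$.

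Next, I would verify that $\phi$ remains odd. Since $H^1_{per,odd}([0,L])$ is the kernel of the bounded linear map $u \mapsto u + u(-\,\cdot\,)$, it is a closed subspace of $H^1_{per}([0,L])$, hence weakly closed. Therefore $\phi \in H^1_{per,odd}([0,L])$. Invoking the compact embedding $H^1_{per}([0,L]) \hookrightarrow L^{p+2}_{per}([0,L])$, I get $u_n \to \phi$ strongly in $L^{p+2}_{per}([0,L])$, so $\int_0^L \phi^{p+2}\,dx = \gamma$ and in particular $\phi \not\equiv 0$; thus $\phi \in Z_\gamma$. Finally, weak lower semicontinuity of $\mathcal{B}_\omega$ yields
\begin{equation*}
\mathcal{B}_\omega(\phi) \leq \liminf_{n\to\infty} \mathcal{B}_\omega(u_n) = r,
\end{equation*}
and since $\phi \in Z_\gamma$ the reverse inequality also holds, giving $\mathcal{B}_\omega(\phi) = r$.

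The main (mild) obstacle is ensuring that the symmetry constraint "$u$ is odd" is compatible with taking weak limits; this is purely a consequence of $H^1_{per,odd}$ being a closed (hence weakly closed) subspace, so no compactness loss occurs along orbits of a noncompact symmetry group, and the direct method goes through verbatim as in Lemma \ref{minlem}. One small point worth noting at the end is that since $p+2$ is even and $\phi$ is odd, the integral $\int_0^L \phi^{p+2}\,dx$ is well-defined and equal to $\gamma > 0$, so nontriviality is automatic from membership in $Z_\gamma$.
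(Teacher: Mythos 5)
Your proposal is correct and follows exactly the same direct method as the paper, whose proof of this lemma is simply stated to be ``similar to the proof of Lemma \ref{minlem}''; the only genuinely new point --- that oddness survives the weak limit because $H^1_{per,odd}$ is a closed, hence weakly closed, subspace --- is precisely the detail the paper leaves implicit, and you handle it correctly.
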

	\begin{proof}
		The proof of this result is similar to the proof of Lemma $\ref{minlem}$.
	\end{proof}
	
	As before, by Lemma \ref{minlem1} and  Lagrange's Multiplier Theorem, we guarantee the existence of $C_2$ such that
	\begin{equation}\label{lagrange1}
		-\phi''+\omega\phi=C_2\phi^{p+1}.
	\end{equation}
	Since $\phi$ is nontrivial, a standard rescaling argument gives us that the Lagrange Multiplier  can be chosen as $C_2=1$. By using similar arguments as determined above, we guarantee the existence of a convenient open interval $I$ and a smooth curve $\omega\in I\mapsto\phi\in H_{per}^n([0,L])$, $n\in\mathbb{N}$, satisfying the equation
	\begin{equation}\label{ode-wave134}
		-\phi''+\omega\phi-\phi^{p+1}=0.
	\end{equation}
	
	\begin{lema}\label{lemnLodd} Let $\phi$ be the solution obtained by Lemma $\ref{minlem1}$. We have that $\ker(\mathcal{L})=[\phi']$ and $n(\mathcal{L})=2$.
	\end{lema}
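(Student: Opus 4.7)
The plan is to exploit the parity symmetry of $\mathcal{L}$ together with Sturm--Liouville oscillation and the variational characterization from Lemma \ref{minlem1}. Since $\phi$ is odd and $p$ is even, the potential $(p+1)\phi^p$ in $\mathcal{L}=-\partial_x^2+\omega-(p+1)\phi^p$ is even, so $\mathcal{L}$ commutes with the reflection $x\mapsto -x$ and decomposes as $\mathcal{L}=\mathcal{L}_{e}\oplus\mathcal{L}_{o}$ on the even and odd subspaces of $L_{per}^2([0,L])$. Note that $\phi'$ is even, so any kernel element lives in the even sector and $\mathcal{L}_{o}$ has trivial kernel.

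First, I would establish $\ker(\mathcal{L})=[\phi']$ along the same route used earlier in this subsection: classical ODE smoothness applied to $(\ref{ode-wave134})$ provides a smooth curve $\omega\mapsto\phi_{\omega}\in H^1_{per,odd}([0,L])$ of odd solutions, and Proposition \ref{propsimp} then applies. Its Wronskian proof transfers to odd $\phi$ with $A=0$ by taking the second Floquet solution $\bar y$ odd (since $\phi'$ is now even), in which case $\int_0^L \bar y\,dx=0$ trivially, while the remaining terms $\omega\int_0^L\bar y\phi\,dx=\omega\langle\mathcal{L}\psi,\bar y\rangle$ and $\int_0^L\bar y\phi^{p+1}dx=-\tfrac{1}{p}\langle\bar y,\mathcal{L}\phi\rangle$ both vanish by self-adjointness and $\mathcal{L}\bar y=0$, producing again the contradiction $L=0$. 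Next, for the lower bound $n(\mathcal{L})\ge 2$: from $(\ref{ode-wave134})$ we obtain $\mathcal{L}\phi=-p\phi^{p+1}$, so $\langle\mathcal{L}\phi,\phi\rangle=-p\gamma<0$ with $\phi$ odd, giving $n(\mathcal{L}_{o})\ge 1$; while the Hill ground state $\lambda_0$ has a strictly positive, hence even, eigenfunction, giving $n(\mathcal{L}_{e})\ge 1$.

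For the matching upper bound $n(\mathcal{L})\le 2$, I would invoke the Sturm oscillation indexing $\lambda_0<\lambda_1\le\lambda_2<\lambda_3\le\cdots$. Any odd eigenfunction has at least two zeros in $[0,L)$ (at $0$ and $L/2$, by oddness and $L$-periodicity), so the negative odd eigenvalue produced above is at least $\lambda_1$, forcing $\lambda_1<0$. By assumption $\mathrm{(a1)}$, $\phi'$ has exactly two zeros in $[0,L)$, so as a kernel element it must lie in $[\lambda_1]\cup[\lambda_2]$; since $\lambda_1<0$ is excluded, we must have $\phi'\in[\lambda_2]$ with $\lambda_2=0$. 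The strict inequality $\lambda_2<\lambda_3$ then yields $\lambda_3>0$, so the only negative eigenvalues are $\lambda_0$ and $\lambda_1$, giving $n(\mathcal{L})=2$ and $\ker(\mathcal{L})=[\phi']$ simple.

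The step I expect to be most delicate is verifying that Proposition \ref{propsimp} genuinely applies in the odd-$\phi$, $A\equiv 0$ setting, since its statement explicitly records only the even case on the line $A\equiv 0$. The argument transfers, but the parities of $\bar y$ and $\phi'$ swap, so one must re-examine which Wronskian integrals vanish trivially and which require the self-adjointness identities $\mathcal{L}\bar y=0$ and $\mathcal{L}\phi=-p\phi^{p+1}$. A secondary but routine point is checking that the odd minimizer produced by Lemma \ref{minlem1} satisfies $\mathrm{(a1)}$, i.e., that $\phi'$ has exactly two zeros in the fundamental cell; this can be read off from the ``sn-type'' phase-plane orbit of $-\phi''+\omega\phi-\phi^{p+1}=0$ surrounding the origin.
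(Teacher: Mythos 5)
Your argument is correct and follows essentially the same route as the paper: parity decomposition of $\mathcal{L}$, the sign of $\langle\mathcal{L}\phi,\phi\rangle=-p\gamma<0$ in the odd sector, the positive (hence even) ground state plus Floquet/oscillation counting of the zeros of $\phi'$ for the upper bound $n(\mathcal{L})\le 2$, and Proposition \ref{propsimp} for the simplicity of the kernel. The only points worth making explicit are that $n(\mathcal{L}_{e})\ge 1$ requires $\lambda_0<0$ (immediate, since $\lambda_0$ lies below the negative odd-sector eigenvalue you have already produced), and that your oscillation argument by itself already forces $\lambda_1<\lambda_2=0<\lambda_3$ and hence $\ker(\mathcal{L})=[\phi']$, so the appeal to Proposition \ref{propsimp} is in fact redundant.
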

	\begin{proof} The existence of a smooth curve of odd periodic waves depending on $\omega$ gives us by Proposition $\ref{propsimp}$ that $\ker(\mathcal{L})$ is simple. \\
		\indent We determine $n(\mathcal{L})$. In fact, we see that $\phi$ is a minimizer of $\mathcal{B}_{\omega}$ with one constraint in the Sobolev space $H_{per,odd}^1([0,L])$ constituted by odd functions. Since $\langle\mathcal{L}|_{odd}\phi,\phi\rangle<0$, we obtain by Courant's Min-Max Principle that $n(\mathcal{L}|_{odd})=1$. Next, solution $\phi'$ is even having two zeros in the interval $[0,L)$. Using the standard Floquet theory in \cite{Magnus}, we see that zero is not the first eigenvalue of $\mathcal{L}|_{even}$, so that $n(\mathcal{L}|_{even})\geq1$. Again from the Floquet theory, since $\phi'$ has only two zeros in the interval $[0,L)$, we see that $n(\mathcal{L})\leq2$. Therefore, the only possibility is that $n(\mathcal{L})=n(\mathcal{L}|_{odd})+n(\mathcal{L}|_{even})=2$.
	\end{proof}
	
	\indent Analysis above gives us the following sentence: for $\phi$  solution of $(\ref{ode-wave13})$ with $\omega_0>0$ and the corresponding single point $(\omega_0,0)$ in the parameter regime, we have that $n(\mathcal{L}_{(\omega_0,0)})=2$ and $\ker(\mathcal{L}_{(\omega_0,0)})=[\phi']$. Therefore, by Theorem $\ref{teo1}$ and Remark $\ref{obsiso}$ one has that $\theta\neq0$ for all $(\omega,A)$ in an open subset $\mathcal{O}\subset\mathcal{P}$. This means that the solution $\phi$ of $(\ref{odekdv})$ satisfies $n(\mathcal{L})=2$ and $\ker(\mathcal{L})=[\phi']$ for all $(\omega,A)\in \mathcal{O}$.
	\begin{obs} Defining $\varphi:=\phi(\cdot-L/4)$, we can consider the solution obtained by Lemma $\ref{minlem1}$ as being even and satisfying the mean zero condition $\int_0^{L}\varphi(x)dx=0$. In our paper and in order to avoid dubiety of notation, we keep the notation $\phi$ instead of $\varphi$ to indicate an even zero mean periodic wave satisfying equation $(\ref{ode-wave134})$. 
	\end{obs}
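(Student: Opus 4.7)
The plan is to reduce both conclusions to a single structural property of the odd $L$-periodic minimizer $\phi$ from Lemma~\ref{minlem1}, namely the half-period antisymmetry
\[
\phi(x+L/2)=-\phi(x),\qquad x\in\mathbb{R}.
\]
Granted this identity, both claims follow by direct computation. For evenness,
\[
\varphi(-x)=\phi(-x-L/4)=-\phi(x+L/4)=-\phi\!\bigl((x-L/4)+L/2\bigr)=\phi(x-L/4)=\varphi(x),
\]
where the second equality uses oddness of $\phi$ and the third uses the antisymmetry. For the zero-mean property, a change of variables together with $L$-periodicity reduces things to $\phi$, and its oddness closes the argument:
\[
\int_0^L\varphi(x)\,dx=\int_0^L\phi(x)\,dx=\int_{-L/2}^{L/2}\phi(x)\,dx=0.
\]

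To establish the antisymmetry I would argue via ODE uniqueness. First, since $\phi$ is odd and $L$-periodic, $\phi(0)=0$ and $\phi(L/2)=\phi(-L/2)=-\phi(L/2)$, so $\phi(L/2)=0$ as well. Because $p$ is even, the nonlinearity $\phi^{p+1}$ in $(\ref{ode-wave134})$ is odd in $\phi$, so the equation is invariant under the reflection $\phi\mapsto-\phi$. Setting $\psi(x):=-\phi(x+L/2)$, the combination of translation invariance and this reflection invariance shows that $\psi$ solves the same equation $(\ref{ode-wave134})$, with $\psi(0)=-\phi(L/2)=0=\phi(0)$. To match derivatives at $x=0$ I would invoke the conserved energy
\[
E=\tfrac12(\phi')^2-\tfrac{\omega}{2}\phi^{2}+\tfrac{1}{p+2}\phi^{p+2},
\]
which at the two consecutive zeros $x=0$ and $x=L/2$ reduces to $\tfrac12(\phi')^2$, forcing $\phi'(L/2)=\pm\phi'(0)$. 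Taking the ``$-$'' alternative gives $\psi'(0)=\phi'(0)$, and Cauchy uniqueness for $(\ref{ode-wave134})$ delivers $\psi\equiv\phi$, i.e.\ the desired antisymmetry.

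The principal obstacle is excluding the ``$+$'' alternative $\phi'(L/2)=\phi'(0)$: by Cauchy uniqueness this would force $L/2$ to be already a period of $\phi$, reducing the fundamental period. This is ruled out by the ground-state character of the minimizer of $(\ref{infB1})$, since an odd competitor with strictly smaller fundamental period carries strictly more oscillations and, after rescaling inside the odd class to the same $L^{p+2}$-level, strictly larger Dirichlet energy, contradicting the infimum property. An alternative and perhaps cleaner route is to identify $\phi$ explicitly with the Jacobi sn-type profile solving $(\ref{ode-wave134})$ and invoke the classical relation $\mathrm{sn}(u+2K,k)=-\mathrm{sn}(u,k)$, which furnishes the half-period antisymmetry without passing through the uniqueness argument.
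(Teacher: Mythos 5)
Your argument is essentially correct, and it is genuinely different from what the paper does: the paper offers no proof of this remark at all, and instead justifies it implicitly in the next subsection by exhibiting the explicit even, zero-mean cnoidal profile $(\ref{cnoid4kdv})$ and asserting (via ODE uniqueness) that it coincides, up to translation, with the minimizer of $(\ref{infB1})$; the half-period antisymmetry is then inherited from ${\rm cn}(u+2K,k)=-{\rm cn}(u,k)$, which is exactly the ``alternative route'' you mention at the end. Your primary route is an intrinsic one: reduce everything to $\phi(x+L/2)=-\phi(x)$, and derive that identity from the translation--reflection symmetry of $(\ref{ode-wave134})$ (valid because $p$ is even, so $s\mapsto s^{p+1}$ is odd), conservation of the first integral $E$, and Cauchy uniqueness. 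This buys generality --- it works for any even $p$ without elliptic-function formulas --- and all the individual computations (evenness of $\varphi$ from oddness plus antisymmetry, and $\int_0^L\varphi\,dx=\int_0^L\phi\,dx=0$) check out.

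The one place you should tighten is the exclusion of the alternative $\phi'(L/2)=+\phi'(0)$. As stated, ``strictly more oscillations, hence strictly larger Dirichlet energy after rescaling'' is only a slogan; make it a computation. If $\phi'(L/2)=\phi'(0)$, Cauchy uniqueness makes $\phi$ an $L/2$-periodic function, and then the stretched function $\tilde\phi(x):=\phi(x/2)$ is odd, $L$-periodic, and satisfies $\int_0^L\tilde\phi^{\,p+2}\,dx=2\int_0^{L/2}\phi^{\,p+2}\,dx=\int_0^L\phi^{\,p+2}\,dx=\gamma$ (no amplitude renormalization is needed), while
\[
\mathcal{B}_{\omega}(\tilde\phi)=\tfrac12\Bigl(\tfrac14\int_0^L(\phi')^2\,dx+\omega\int_0^L\phi^2\,dx\Bigr)<\mathcal{B}_{\omega}(\phi),
\]
contradicting minimality. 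Also note that the word ``consecutive'' in your energy step is unnecessary (the identity $E=\tfrac12(\phi')^2$ holds at every zero of $\phi$), and that $\phi'(0)\neq0$ must be observed once, since otherwise $\phi\equiv0$. With these points made explicit, the proof is complete.
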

	\subsection{Positive periodic waves for the critical KdV - Proof of Theorem $\ref{teoest}$-a)} We start our examples studying the spectral stability of periodic waves  for $(\ref{odekdv})$ with $A=0$ and $g(s)=s^5$. According with \cite{AN2}, it is possible to determine a positive periodic wave with dnoidal profile as
	\begin{equation}\label{dn4kdv}
		\phi(x)=\frac{a{\rm dn}\left(\frac{2K(k)}{L}x,k\right)}{\sqrt{1-b{\rm sn}^2\left(\frac{2K(k)}{L}x,k\right)}},
	\end{equation}
	where $K(k)=\int_0^1\frac{dt}{\sqrt{(1-t^2)(1-k^2t^2)}}$ is the complete elliptic integral of the first kind. Parameters $a$, $b$ in $(\ref{dn4kdv})$ and the wave speed $\omega$ in $(\ref{ode-wave134})$ depend smoothly on the modulus $k\in(0,1)$ and they are given by
	\begin{equation}\label{apos}		
		a=\frac{[4(2k^2-1+2(1-k^2+k^4)^{1/2})K(k)^2L^2]^{1/4}}{L},
		\ \ \ 
		b=1-k^2-\sqrt{k^4-k^2+1},
	\end{equation}
	and
	\begin{equation}\label{wkpos}
		\omega=\frac{4K(k)^2\sqrt{k^4-k^2+1}}{L^2}.
	\end{equation}
	\indent By Remark $\ref{remN1}$, one has that $n(\mathcal{L})=1$ and $\ker(\mathcal{L})=[\phi']$. Therefore, by Theorem $\ref{teo2}$ we guarantee the existence of a smooth 
	surface $(\omega,A)\in\mathcal{O}\mapsto\phi_{(\omega,A)}\in
	H_{per,e}^n([0,L]),\ n\gg1,$ of periodic waves, all of them
	with the same period $L>0$.\\
	\indent Let $L>0$ be fixed. According with the tables below (see also Figure 1), we obtain that $\mathcal{I}>0$ and  $\det(\mathcal{D})<0$ for all $k\in(0,1)$. Then, one has the spectral stability of the periodic wave $\phi$ in an open neighbourhood of $(\omega,0)$ where $\omega>\frac{\pi^2}{L^2}$ (see Corollary $\ref{coroest}$).
	
	\begin{obs}
		Let $L>0$ be fixed. Since $\omega$ in $(\ref{wkpos})$ is a strictly increasing function depending smoothly on the modulus $k\in(0,1)$, we see for $k\rightarrow 0^+$ and the fact $K(0)=\frac{\pi}{2}$ that $\omega\rightarrow \frac{\pi^2}{L^2}^{+}$. Therefore, we have the basic estimate $\omega>\frac{\pi^2}{L^2}$ for the existence of periodic waves with dnoidal profile in $(\ref{dn4kdv})$. In addition, if $\omega>\frac{\pi^2}{L^2}$, the standard ODE theory enables us to conclude that $\phi$ in $(\ref{dn4kdv})$ is the unique positive solution which solves equation $(\ref{odekdv})$ with $A=0$ and $g(s)=s^5$. Therefore, $\phi$ in $(\ref{dn4kdv})$ satisfies the minimization problem $(\ref{infB})$
	\end{obs}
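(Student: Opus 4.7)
The plan is to establish, in order: (i) the threshold $\omega > \pi^2/L^2$ for existence of the dnoidal profile $(\ref{dn4kdv})$; (ii) uniqueness of the positive $L$-periodic solution for each such $\omega$; and (iii) the identification of $\phi$ in $(\ref{dn4kdv})$ with the minimizer of $(\ref{infB})$. The main obstacle will be the strict monotonicity of $\omega(k)$ required in (i), since the factor $\sqrt{k^4 - k^2 + 1}$ is itself nonmonotone (decreasing on $(0,1/\sqrt{2})$ and increasing afterward), and the claim depends on cancellation with the growth of $K(k)^2$.

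For (i), I would analyze the scalar function $k \in (0,1) \mapsto \omega(k)$ defined by $(\ref{wkpos})$. Smoothness is immediate from that of the complete elliptic integral $K$ and the polynomial $h(k) = k^4 - k^2 + 1$. The limit at $k \to 0^+$ is $\omega \to \pi^2/L^2$, using $K(0) = \pi/2$ and $\sqrt{h(0)} = 1$; the limit at $k \to 1^-$ is $\omega \to +\infty$, since $K(k) \to +\infty$ while $\sqrt{h(1)} = 1$. For strict monotonicity I would compute $\omega'(k)$ using the classical identity $K'(k) = [E(k) - (1-k^2)K(k)]/[k(1-k^2)]$, reducing positivity of $\omega'(k)$ to an inequality in $K(k)$, $E(k)$, and polynomials in $k$, to be dispatched by standard elliptic-integral estimates. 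Combining these, $\omega: (0,1) \to (\pi^2/L^2, +\infty)$ is a smooth strictly increasing bijection, which establishes the basic estimate.

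For (ii), fix $\omega > \pi^2/L^2$ and consider even positive $L$-periodic solutions of $-\phi'' + \omega\phi - \phi^5 = 0$. Multiplying by $\phi'$ gives the first integral $\mathcal{E}(\phi, \phi') = \frac{1}{2}(\phi')^2 - \frac{\omega}{2}\phi^2 + \frac{1}{6}\phi^6$, whose phase portrait has a saddle at $\phi = 0$ and centers at $\phi = \pm \omega^{1/4}$. Positive periodic orbits fill the region enclosed by the homoclinic loop through the saddle, and even solutions (which automatically satisfy $\phi'(0) = 0$) are parametrized by the amplitude $\phi(0) > \omega^{1/4}$. A standard period-amplitude monotonicity argument, verifiable directly from the explicit representation $(\ref{dn4kdv})$ together with the bijection in (i), shows at most one amplitude yields period $L$; hence uniqueness.

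For (iii), by Lemma $\ref{minlem}$ with $p = 4$ the problem $(\ref{infB})$ admits a minimizer $\widetilde{\phi} \in Y_\gamma$ which, after the rescaling that normalizes the Lagrange multiplier to $1$, solves $-\widetilde{\phi}'' + \omega \widetilde{\phi} = \widetilde{\phi}^5$. Since $|\widetilde{\phi}|$ preserves the constraint ($p + 2 = 6$ is even) and does not increase $\mathcal{B}_\omega$, we may assume $\widetilde{\phi} \geq 0$; the strong maximum principle applied to $-\widetilde{\phi}'' + (\omega - \widetilde{\phi}^4)\widetilde{\phi} = 0$ then forces $\widetilde{\phi} > 0$. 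By (ii), $\widetilde{\phi}$ coincides up to translation with $\phi$ in $(\ref{dn4kdv})$, completing the identification.
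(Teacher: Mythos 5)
Your outline matches the paper's own (essentially unproved) reasoning in this remark -- smooth strictly increasing bijection $k\mapsto\omega(k)$ from $(0,1)$ onto $(\pi^2/L^2,\infty)$ with $K(0)=\pi/2$ giving the threshold, phase-plane uniqueness of the positive even $L$-periodic solution, and identification of the (nonnegative, hence positive by the maximum principle) constrained minimizer with the dnoidal profile -- so this is the same approach, carried out in more detail than the paper supplies. The one step to treat with care is the positivity of $\omega'(k)$: it reduces to $2(k^4-k^2+1)E(k)>(1-k^2)(2-k^2)K(k)$, which degenerates to second order at $k=0$, so the crude bound $E(k)>(1-k^2)K(k)$ does not close the argument for $k<1/\sqrt{2}$ and a sharper series or quadrature estimate is needed; the paper itself merely asserts the analogous sign in $(\ref{dwdk})$ without proof, so you are at least at its level of rigor.
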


	\begin{obs} Another important fact: for a fixed $L>0$, we see that $\det(\mathcal{D})$ goes to zero when $k\rightarrow 1^{-}$ (in our tables, this fact also occurs for different values of $L$ by taking $k$ closer to $1$). Thus, we ``recover" the property $\frac{d}{d\omega}\int_{-\infty}^{\infty}Q^2dx=0$, where $Q$ is the hyperbolic secant profile for the critical KdV equation with wave speed $\omega>0$.
	\end{obs}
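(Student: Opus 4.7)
The strategy is to identify the regime $k\to 1^-$ with fixed $L>0$ as the critical-KdV soliton limit, and to verify that the critical $L^2$-scaling forces the matrix $\mathcal{D}$ to become degenerate. First, I would use the elliptic-function asymptotics $K(k)\to\infty$, $\mathrm{dn}(\cdot,k)\to\mathrm{sech}$, $\mathrm{sn}(\cdot,k)\to\tanh$, together with the formulas $(\ref{apos})$--$(\ref{wkpos})$, to show that, with $L$ fixed, the wave speed $\omega=\omega(k)$ diverges at the explicit rate $\omega\sim 4K(k)^{2}/L^{2}$. Applying the $L^{2}$-critical scaling $\phi_{\omega}(x)=\omega^{1/4}Q(\omega^{1/2}x)$, where $Q$ is the ground-state soliton of $-Q''+Q-Q^{5}=0$, I would prove that the profile $(\ref{dn4kdv})$, when translated so that its maximum lies at the origin, converges to $\phi_{\omega}$ in $C^{k}_{loc}(\mathbb{R})$ for every $k$, with the periodic corrections decaying exponentially (since $Q$ itself decays like $e^{-|x|}$).

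The next step is to evaluate each entry of the matrix $M$ appearing in $(\ref{Dmatrix})$ in this limit. By the critical-scaling identity $\int_{\mathbb{R}}Q_{\omega}^{2}\,dx=\int_{\mathbb{R}}Q^{2}\,dy$, which is \emph{independent of $\omega$}, one obtains
\begin{equation*}
\int_{0}^{L}\phi^{2}\,dx\;=\;\int_{\mathbb{R}}Q_{\omega}^{2}\,dx+O(e^{-c\sqrt{\omega}L})\quad\Longrightarrow\quad\langle\mathcal{L}^{-1}\phi,\phi\rangle=-\tfrac12\,\partial_{\omega}\!\!\int_{0}^{L}\phi^{2}\,dx\longrightarrow 0,
\end{equation*}
and this is precisely the manifestation of the soliton identity $\partial_{\omega}\int_{\mathbb{R}}Q^{2}\,dx=0$ quoted in the Remark. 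Using $(\ref{psi2})$ and the rescaling of $\psi$ and $\eta$ I would then track the algebraic rates $\langle\mathcal{L}^{-1}\phi,1\rangle\sim\omega^{-5/4}$ (from $\int\phi\,dx\sim\omega^{-1/4}$) and $\langle\mathcal{L}^{-1}1,1\rangle\sim L/\omega$ (because away from the concentration region $\eta$ satisfies $-\eta''+\omega\eta\approx 1$, giving $\eta\approx 1/\omega$).

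Combining these rates with $\det(\mathcal{D})=\langle\mathcal{L}^{-1}1,1\rangle^{-2}\bigl(\langle\mathcal{L}^{-1}\phi,\phi\rangle\langle\mathcal{L}^{-1}1,1\rangle-\langle\mathcal{L}^{-1}\phi,1\rangle^{2}\bigr)$ yields the asymptotic
\begin{equation*}
\det(\mathcal{D})\;\sim\;-\,\omega^{-1/2}\,\longrightarrow\,0\qquad (k\to 1^-),
\end{equation*}
with the sign matching the numerical tables cited by the authors; moreover the leading vanishing is driven precisely by the critical relation $\partial_{\omega}\int Q^{2}=0$, making the announced ``recovery'' explicit.

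The main obstacle I expect is the rigorous control of the parameter derivatives of $\phi$ in the soliton limit, in particular establishing that the smooth surface $\phi_{(\omega,A)}$ produced by Theorem \ref{teo2} converges, together with $\psi=\partial_{\omega}\phi$ and $\eta=\partial_{A}\phi$, to the corresponding soliton objects as $k\to 1^-$. The cleanest way to handle this is to work directly with the initial value problems $(\ref{psi2})$: one shows that the fundamental solution $\bar{y}$ of $(\ref{y})$ stabilises exponentially to a decaying/growing pair for the soliton operator, which gives uniform bounds on $\psi(0)$ and $\eta(0)$ and hence on the integrals $\int\psi\,dx$ and $\int\eta\,dx$. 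Everything else is a bookkeeping exercise in matching the exponentially small periodicity corrections against the algebraic rates above.
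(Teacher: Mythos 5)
Your proposal is a genuine analytic argument, whereas the paper offers no proof of this Remark at all: the authors simply point at the numerical tables (the $\det(\mathcal{D})$ column computed from the initial value problems $(\ref{psi2})$) and observe that the entries shrink as $k$ approaches $1$; the sentence about $\frac{d}{d\omega}\int_{-\infty}^{\infty}Q^2\,dx=0$ is offered as an interpretation of the numerics, not derived. So you are not reproducing their route but replacing a numerical observation with a soliton-limit asymptotic analysis, and your scheme is sound: with $L$ fixed, $k\rightarrow1^-$ forces $\omega\rightarrow\infty$ by $(\ref{wkpos})$, the dnoidal profile $(\ref{dn4kdv})$ concentrates to $\omega^{1/4}Q(\sqrt{\omega}\,\cdot)$ up to exponentially small corrections (no translation is needed, by the way, since $(\ref{dn4kdv})$ already peaks at the origin), and the three entries of $\mathcal{D}$ then scale as you say: $\langle\mathcal{L}^{-1}\phi,\phi\rangle=O(e^{-c\sqrt{\omega}L})$ by the critical identity, $\langle\mathcal{L}^{-1}\phi,1\rangle\sim\omega^{-5/4}$, $\mathcal{I}\sim L/\omega$, whence $\det(\mathcal{D})\sim-\omega^{-1/2}\rightarrow0$ with the sign matching the tables. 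What your approach buys is an actual proof, a rate, and the sign, where the paper has only finitely many data points; what it costs is the technical work you flag at the end, namely uniform-in-$k$ control of $\psi=\partial_\omega\phi$ and $\eta=\partial_A\phi$ so that the limit $k\rightarrow1^-$ can be interchanged with $\partial_\omega$ and $\partial_A$ --- that step is only sketched and is where all the real labor would sit.

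One internal inconsistency is worth fixing: you assert that the leading vanishing is ``driven precisely by'' $\partial_\omega\int Q^2=0$, but by your own rates the diagonal contribution $\langle\mathcal{L}^{-1}\phi,\phi\rangle/\mathcal{I}$ is exponentially small, while the $-\omega^{-1/2}$ rate and the negative sign come entirely from the off-diagonal term $-\langle\mathcal{L}^{-1}\phi,1\rangle^2/\mathcal{I}^2$. The critical identity is still essential --- without it the diagonal term would be of size $O(\omega^{2/p-3/2})\big/O(\omega^{-1})=O(1)$ and $\det(\mathcal{D})$ would not vanish at all --- but it only explains why that term drops out, not the observed rate. Stated that way, your argument is consistent with both the tables and the authors' heuristic.
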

	\begin{table}[h]
		\centering
		\begin{tabular}{|c|c|c|}\hline
			\multicolumn{3}{|c|}{$L= 2\pi$} \\\hline
			$\kappa_0$ &  $\mathcal{I}$ & $\det(\mathcal{D})$ \\\hline
			$0.1$  & $ 5.4972$ &$-0.2692$ \\\hline
			$ 0.3$  & $5.4932$ & $-0.2690$ \\\hline
			$ 0.5$ & $ 5.4568$ & $-0.2669$\\\hline
			$0.7$  & $5.2936$ & $-0.2573$\\\hline
			$ 0.9$  &  $4.6183 $& $-0.2148$\\\hline
			$ 0.9999$ &  $1.4287$& $-0.0269$\\\hline
		\end{tabular}
		\centering
		\begin{tabular}{|c|c|c|}\hline
		\multicolumn{3}{|c|}{$L= 20$} \\\hline
		$\kappa_0$ & $\mathcal{I}$ & $\det(\mathcal{D})$ \\\hline
		$0.1$ & $177.272 $ & $-2.7286$\\\hline
		$ 0.3$  & $177.167$& $-2.7257$ \\\hline
		$ 0.5$ & $175.993$ & $-2.7046$\\\hline
		$0.7$  &$170.729$ & $-2.6074$\\\hline
		$0.9$  &  $148.949$ & $-2.1768$\\\hline
		$0.9999$  &  $46.0816$ & $-0.2772$\\\hline
		\end{tabular}
		\end{table}
	\begin{table}[h]
		\centering
		\begin{tabular}{|c|c|c|}\hline
			\multicolumn{3}{|c|}{$L= 50$} \\\hline
			$\kappa_0$ & $\mathcal{I}$ & $\det(\mathcal{D})$ \\\hline
			$0.1$  & $2770.18$ & $-17.0531$\\\hline
			$ 0.3$  & $2768.24$ & $-17.0361$\\\hline
			$ 0.5$ & $2749.89 $ & $-16.9039$\\\hline
			$0.7$  &$2667.64$ & $-16.2963$ \\\hline
			$ 0.9$ &  $2327.33$ & $-13.6055$\\\hline
			$ 0.9999$ &  $720.16$ & $-1.7242$\\\hline
		\end{tabular}
		\centering
		\begin{tabular}{|c|c|c|}\hline
			\multicolumn{3}{|c|}{$L= 100$} \\\hline
			$\kappa_0$ & $\mathcal{I}$ & $\det(\mathcal{D})$\\\hline
			$0.1$  &$22159.9 $ & $-68.2148$\\\hline
			$ 0.3$  & $22145.9 $ & $-68.1444$\\\hline
			$ 0.5$ & $21999.1 $ & $-67.6155$\\\hline
			$0.7$  &$21341.1$ & $-65.1854$\\\hline
			$ 0.9$ &  $18618.7$ & $-54.4218$\\\hline
			$ 0.9999$  &  $5761.48$ & $-6.8758$\\\hline
		\end{tabular}
		
	\end{table}

	\indent Using Maple program, we can plot the behavior of $\langle\mathcal{L}^{-1}1,1\rangle$ in terms of the modulus $k\in(0,1)$ for the case $L=20$. 
	
	\begin{figure}[!h]\begin{center}
			\includegraphics[scale=0.3]{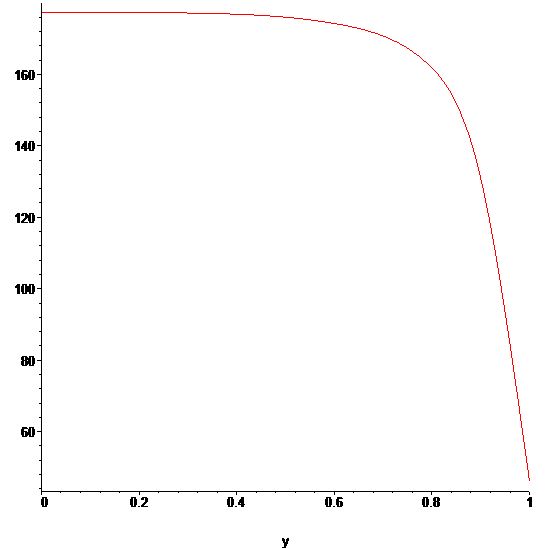}\label{Fig2}
			\caption{\small  Graphic of $\langle\mathcal{L}^{-1}1,1\rangle$ for $L=20$.}
		\end{center}
	\end{figure}

	\subsection{Zero mean periodic waves for the critical KdV - Proof of Theorem $\ref{teoest}$-b)} In what follows, we still consider $A=0$ and $g(s)=s^{5}$ in equation $(\ref{odekdv})$. Our intention is to give a complete scenario for the spectral stability in this case.\\
	\indent Let $L>0$ be fixed. An explicit even periodic wave satisfying the minimization problem in $(\ref{infB1})$ is given by
	\begin{equation}\label{cnoid4kdv}
		\phi(x)=\frac{a{\rm cn}\left(\frac{4K(k)}{L}x,k\right)}{\sqrt{1-b{\rm sn}^2\left(\frac{4K(k)}{L}x,k\right)}},
	\end{equation}
	where $a$, $b$ and $\omega$ also depend smoothly on the elliptic modulus $k\in(0,1)$ as
	\begin{equation}\label{ak}
		a=\frac{2[(2-k^2+2\sqrt{k^4-k^2+1})K(k)^2L^2]^{\frac{1}{4}}}{L},
	\end{equation}
	\begin{equation}\label{bk}
		b=-1+k^2-\sqrt{k^4-k^2+1},
	\end{equation}
	and
	
	\begin{equation}\label{wk}
		\omega=\frac{16K(k)^2\sqrt{k^4-k^2+1}}{L^2}.
	\end{equation}

\begin{obs}
	Let $L>0$ be fixed. Similarly as in the case of positive solutions, we also have the basic estimate $\omega>\frac{4\pi^2}{L^2}$ for the existence of periodic waves with cnoidal profile in $(\ref{cnoid4kdv})$. Moreover, when  $\omega>\frac{4\pi^2}{L^2}$, we obtain by the standard ODE theory that $\phi$ in $(\ref{cnoid4kdv})$ is the unique mean zero solution which solves equation $(\ref{odekdv})$ with $A=0$ and $g(s)=s^5$. The uniqueness of solutions gives us that $\phi$ in $(\ref{cnoid4kdv})$ satisfies the minimization problem $(\ref{infB1})$.
\end{obs}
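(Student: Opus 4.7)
The plan is to prove the three assertions of the remark (the threshold $\omega>4\pi^2/L^2$, ODE uniqueness of the even zero–mean solution, and the identification with the minimizer of \eqref{infB1}) in that order, each building on the previous step.

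First, to establish the threshold, I would analyze the explicit map $k\mapsto \omega(k)=\frac{16K(k)^2\sqrt{k^4-k^2+1}}{L^2}$ on $(0,1)$. Using $K(0)=\pi/2$ and $\sqrt{k^4-k^2+1}\to 1$ as $k\to 0^+$, one obtains $\omega(k)\to 4\pi^2/L^2$ as $k\to 0^+$. I would then show that $\omega(k)$ is strictly increasing on $(0,1)$: since $K(k)$ is strictly increasing with $K'(k)>0$ and blows up at $k=1$, and since $\sqrt{k^4-k^2+1}$ has its minimum $\sqrt{3}/2$ at $k=1/\sqrt{2}$ and grows back to $1$ at the endpoints, the logarithmic derivative computation $\frac{\omega'}{\omega}=\frac{2K'(k)}{K(k)}+\frac{2k^3-k}{k^4-k^2+1}$ is positive throughout $(0,1)$ (the first term dominates the possibly negative second term, which is easily checked using the elementary bound $K'(k)/K(k)\ge \frac{k}{1-k^2}$ together with $K(k)\ge\pi/2$). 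Hence $\omega(k)>4\pi^2/L^2$ for every $k\in(0,1)$, and by surjectivity of $\omega(\cdot)$ onto $(4\pi^2/L^2,+\infty)$, for each $\omega>4\pi^2/L^2$ there is a unique $k(\omega)\in(0,1)$ giving a cnoidal profile of the stated form. An alternative way to see the lower bound is via bifurcation: the linearization of $-\phi''+\omega\phi-\phi^5=0$ at $\phi\equiv 0$ admits $L$-periodic, even, zero-mean solutions only when $\omega$ is a Floquet eigenvalue $4\pi^2 n^2/L^2$; cnoidal profiles bifurcate from $n=1$.

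Second, for uniqueness I would pass to the phase plane. The ODE $-\phi''+\omega\phi-\phi^5=0$ is Hamiltonian with potential $V(\phi)=\tfrac{\omega}{2}\phi^2-\tfrac{1}{6}\phi^6$, having a local minimum at $\phi=0$ (center) and saddles at $\phi=\pm\omega^{1/4}$. The region of bounded nontrivial orbits surrounding the origin forms a one-parameter family parametrized, e.g., by the amplitude $a\in(0,\omega^{1/4})$, and each such orbit is invariant under $(\phi,\phi')\mapsto(-\phi,-\phi')$; translating to put the maximum at $x=0$ therefore yields an even solution whose half-period shift is its negative, hence of mean zero. The fundamental period is
\[
T(a)=2\sqrt{2}\int_0^{a}\frac{d\phi}{\sqrt{(a^2-\phi^2)\bigl(\omega-\tfrac{1}{3}(a^4+a^2\phi^2+\phi^4)\bigr)}},
\]
which I will show is a strictly monotone function of $a$ on $(0,\omega^{1/4})$, ranging continuously from $2\pi/\sqrt{\omega}$ at $a=0^+$ to $+\infty$ at $a=\omega^{1/4}$. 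Monotonicity follows from the standard substitution $\phi=a s$ and the observation that the resulting integrand, after factoring, has derivative in $a$ of definite sign (this is the most technical step; alternatively, invoke the Chicone-type criterion for monotonic periods, since $V$ is a polynomial of definite convexity properties). Consequently, the equation $T(a)=L$ has a unique solution $a=a(\omega,L)\in(0,\omega^{1/4})$, showing that, up to translation, there is at most one $L$-periodic solution of the ODE with zero mean — which must therefore coincide with \eqref{cnoid4kdv}.

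Finally, to identify the minimizer of \eqref{infB1} with $\phi$ in \eqref{cnoid4kdv}, I invoke Lemma \ref{minlem1}: there exists a nontrivial odd minimizer $\phi_{\min}\in Z_\gamma$, satisfying (after rescaling the Lagrange multiplier) equation \eqref{ode-wave134} with $p=4$. Since $\phi_{\min}$ is odd, it has zero mean, and after a translation by $L/4$ it becomes an \emph{even} $L$-periodic mean-zero solution of $-\phi''+\omega\phi-\phi^5=0$. By the uniqueness established in the previous paragraph, this translated solution must coincide with \eqref{cnoid4kdv}. I expect the main obstacle to be the monotonicity of the period function $T(a)$ — the bifurcation argument and the ODE uniqueness both depend on this, and verifying the sign of $T'(a)$ directly requires a careful manipulation of elliptic integrals, though the explicit closed form $T(a)=L$ given by \eqref{ak}–\eqref{wk} via $k\mapsto\omega(k)$ provides an alternative, purely algebraic route once the monotonicity of $\omega(k)$ in $k$ is in hand.
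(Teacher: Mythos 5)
There is a genuine gap — in fact two concrete errors — in your argument, even though the overall strategy (monotonicity of $\omega(k)$ in $(\ref{wk})$, phase–plane uniqueness, then translating the odd minimizer of $(\ref{infB1})$ by $L/4$) is the natural one and is essentially what the paper tacitly relies on (the paper itself gives no proof here beyond ``similarly as in the positive case'' and the asserted sign in $(\ref{dwdk})$). First, your justification of $\omega'(k)>0$ fails: the ``elementary bound'' $K'(k)/K(k)\ge k/(1-k^2)$ is false — since $E(k)\le K(k)$ one has $K'(k)/K(k)=\frac{E(k)-(1-k^2)K(k)}{k(1-k^2)K(k)}\le \frac{k}{1-k^2}$, the reverse inequality. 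Worse, no crude domination can work: near $k=0$ one has $2K'(k)/K(k)=k+\tfrac{7}{8}k^3+O(k^5)$ while $\frac{2k^3-k}{k^4-k^2+1}=-k+k^3+O(k^5)$, so the two terms of $\omega'/\omega$ cancel to first order and positivity only emerges at order $k^3$ (the sum is $\tfrac{15}{8}k^3+O(k^5)$). A finer expansion, or the exact identity $(\ref{dwdk})$, is needed.

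Second, your phase portrait is upside down. For $\omega>0$ the equation is $\phi''=\omega\phi-\phi^5$, so the origin is a \emph{saddle} (linearization $y''=\omega y$) and $\phi=\pm\omega^{1/4}$ are \emph{centers}; the sign-changing, zero-mean solutions such as $(\ref{cnoid4kdv})$ live on orbits \emph{exterior} to the figure-eight homoclinic loop through the origin, with turning amplitude $a>(3\omega)^{1/4}$, not $a\in(0,\omega^{1/4})$. Consequently your period integral, its claimed limits ($T\to 2\pi/\sqrt{\omega}$ as $a\to0^+$), and the ``bifurcation from $\omega=4\pi^2n^2/L^2$'' alternative are all incorrect: the linearized problem $-y''+\omega y=0$ with $\omega>0$ has no nontrivial $L$-periodic solutions, and indeed the family $(\ref{cnoid4kdv})$ does not bifurcate from zero — its amplitude in $(\ref{ak})$ tends to $2\sqrt{\pi/L}\neq0$ as $k\to0^+$. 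The uniqueness you need is monotonicity of the period along the exterior branch (where $T\to\infty$ at the homoclinic and $T\to0$ as $a\to\infty$ by the scaling $\phi\mapsto a\psi$, $x\mapsto a^{-2}y$), which is a different and more delicate claim than the one you outline; as written your argument does not address the relevant orbits, and this is precisely where the remark's appeal to ``standard ODE theory'' needs substance. Your final step — translating the odd minimizer by $L/4$ to obtain an even zero-mean solution and invoking uniqueness — is fine and matches the paper's subsequent remark.
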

	For a fixed $\omega_0>0$, we have already determined in the last subsection that the associated linearized operator around the periodic wave $\phi$ given by $(\ref{cnoid4kdv})$ satisfies $n(\mathcal{L}_{(\omega_0,0)})=2$ and $\ker(\mathcal{L}_{(\omega_0,0)})=[\phi']$ (see Theorem $\ref{teo1}$ and Proposition $\ref{propsimp}$). Therefore, we obtain the same spectral properties for all $(\omega,A)$ in an open subset contained in the parameter regime. From Theorem $\ref{teo2}$ we guarantee the existence of a smooth
	surface $$(\omega,A)\in\mathcal{O}\mapsto\phi_{(\omega,A)}\in
	H_{per,e}^n([0,L]),\ \ \ \ n\gg1,$$ of periodic waves, all of them
	with the same period $L>0$.\\
	\indent Since we have constructed the smooth surface
	$(\omega,A)\in\mathcal{O}\mapsto\phi_{(\omega,A)}\in
	H_{per,e}^2([0,L])$ of even periodic waves which solves the
	nonlinear differential equation $(\ref{travkdv})$ with $p=4$, the
	next step is to calculate $n(\mathcal{I})$ and $n(\mathcal{D})$. Table below shows us the behavior of the quantity $\mathcal{I}$ for some (fixed) values of $L>0$.
	
	\begin{table}[h]
		\centering
		\begin{tabular}{|c|c|}\hline
				\multicolumn{2}{|c|}{$L= 2\pi$} \\\hline
			$\kappa_0$& $\mathcal{I}$ \\\hline
			$ 0.0001$ &   $ -0.47290$\\\hline
			$0.1$ &  $ -0.47296$\\\hline
			$ 0.3$ &  $ -0.4643$\\\hline
			$ 0.5$ &$ -0.3886$\\\hline
			$0.7$ &  $-0.0985$ \\\hline
			$0.739$ &  $-0.0024$ \\\hline
			$0.746$ &  $0.0001$ \\\hline
			$ 0.9$ &  $ 0.4919 $\\\hline
			$ 0.9999$ &  $ 0.9958 $\\\hline
			\end{tabular}
		\centering
		\begin{tabular}{|c|c|}\hline
				\multicolumn{2}{|c|}{$L= 20$} \\\hline
			$\kappa_0$& $ \mathcal{I}$ \\\hline
			$ 0.0001$ &   $-16.2331$\\\hline
			$0.1$ & $-16.2298 $\\\hline
			$ 0.3$ & $-15.9064$\\\hline
			$ 0.5$ &  $ -13.3279 $\\\hline
			$ 0.7$ &  $-3.6809 $\\\hline
			$ 0.744$ &  $-0.070 $\\\hline
			$0.7449$ & $0.0095$\\\hline
			$ 0.9$ &  $15.7314$\\\hline
			$ 0.9999$ &   $44.3814$\\\hline	
		\end{tabular}
		\centering
		\begin{tabular}{|c|c|}\hline
			\multicolumn{2}{|c|}{$L= 50$} \\\hline
			$\kappa_0$& $\mathcal{I}$ \\\hline
			$ 0.0001$ &  $ -255.07$\\\hline
			$0.1$ &  $-255.01 $\\\hline
			$ 0.3$ &  $-249.89$\\\hline
			$ 0.5$ & $  -209.40 $\\\hline
			$0.7$ & $-58.24$\\\hline
			$0.74521$ & $-0.0263$\\\hline
			$0.74523$ & $0.0017$\\\hline
			$ 0.9$ &   $ 245.608$\\\hline
			$ 0.9999$ &   $ 71.1702$\\\hline
			
		\end{tabular}
		\centering
		\begin{tabular}{|c|c|}\hline
		\multicolumn{2}{|c|}{$L= 100$} \\\hline
		$\kappa_0$& $\mathcal{I}$ \\\hline
		$ 0.0001$ &  $-2042.21$\\\hline
		$0.1$ & $ -2041.74 $\\\hline
		$ 0.3$ &  $-2000.67 $\\\hline
		$ 0.5$ & $-1676.52 $\\\hline
		$0.7$ & $-466.79$\\\hline
		$0.74528$ & $-0.1066$\\\hline
		$0.74529$ & $0.0045$\\\hline
		$ 0.9$ &  $ 1964.63$\\\hline
		$ 0.9999$ &  $813.37$\\\hline
			
		\end{tabular}
		
	\end{table}

	Now, we plot the graphic of $\langle\mathcal{L}^{-1}1,1\rangle$ for the case $L=2\pi$.
	\newpage
	
	\begin{figure}[!h]\begin{center}
			\includegraphics[scale=0.3]{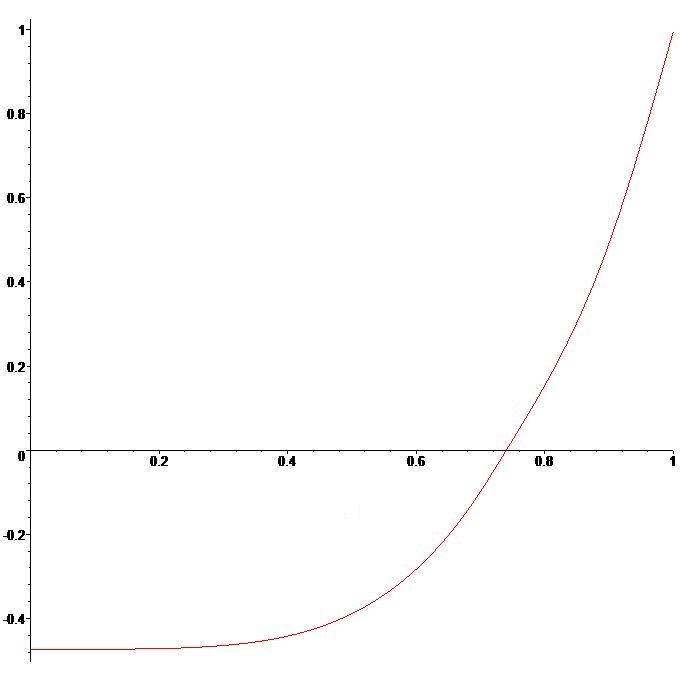}\label{Fig3}
			\caption{\small  Graphic of $\langle\mathcal{L}^{-1}1,1\rangle$ for $L=2\pi$.}
		\end{center}
	\end{figure}

	It remains to calculate $\det(\mathcal{D})$. For the case $(\omega,A)=(\omega,0),$ we see that $\langle\mathcal{L}^{-1}\phi,1\rangle=-\frac{d}{d\omega}\int_0^L\phi dx=0$. If $\mathcal{I}\neq0$, we can use $(\ref{Dmatrix})$ to obtain $$\det(\mathcal{D})=-\frac{1}{2}\ \frac{d}{d\omega}\int_0^L\phi^2dx.$$
	
	\indent Formula (411.03) in \cite{byrd} gives us that
	\begin{eqnarray}\label{norm}\int_0^L\phi^2 dx&=&\frac{a^2L}{K(k)}\int_0^{K(k)}\frac{\textrm{cn}^2(u,k)}{1-b\ \textrm{sn}^2(u,k)}du\nonumber\\
		\nonumber\\
		&=&\frac{a^2L}{K(k)}\ \frac{\pi(1-b)\ [1-\Lambda_0(\beta,k)]}{2\sqrt{b(1-b)(b-k^2)}}\\
		\nonumber\\
		&=&\frac{2\pi\sqrt{(k^2-2b)(1-b)}\ [1-\Lambda_0(\beta,k)]}{\sqrt{b(b-k^2)}}:=\tau(k).\nonumber\end{eqnarray}
	Here, $\Lambda_0$ indicates de Lambda Heumann function defined by
	$$\Lambda_0(\beta,k)=\frac{2}{\pi}[E(k)F(\beta,k')+K(k)E(\beta,k')-K(k)F(\beta,k')],$$  
	where $\beta=\arcsin\displaystyle\left(\frac{1}{\sqrt{1-b}}\right)$ and $ k'=\sqrt{1-k^2}.$
	
	\indent Next, for all $k\in(0,1)$ we have 
	\begin{equation}\label{dwdk}\frac{d\omega}{dk}=-\frac{16 K(k)\ [K(k)\ (k^4-3k^2+2)-2E(k)\ (k^4-k^2+1)]}{L^2k(1-k^2)\sqrt{k^4-k^2+1}}>0.\end{equation}
	Thus, 
	
	\begin{equation}\label{dwnorn}
		\frac{d}{d\omega}\int_0^L\phi^2 dx=\displaystyle\frac{\frac{d}{dk}\int_0^L\phi^2 dx}{\frac{d\omega}{dk}}=\frac{\tau'(k)}{\frac{d\omega}{dk}}.
	\end{equation}
	
	\indent We can plot the behavior of $\tau'$ in terms of the modulus $k\in(0,1)$ to conclude from $(\ref{dwdk})$ and $(\ref{dwnorn})$ that $\frac{d}{d\omega}\int_0^L\phi^2\ dx>0.$ 
	
	\begin{figure}[!h]\begin{center}
			\includegraphics[scale=0.35]{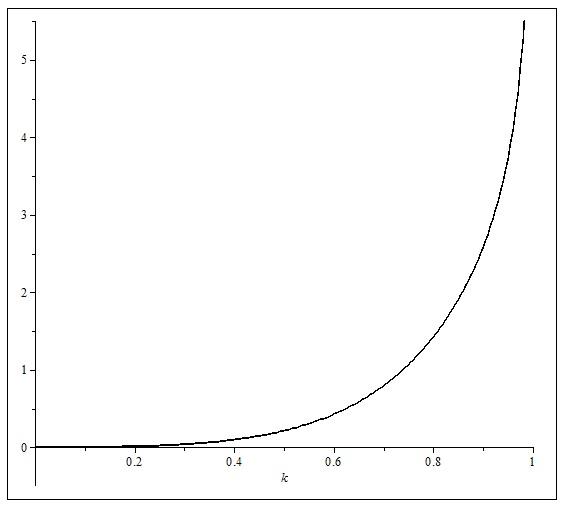}\label{Fig1}
			\caption{\small  Graphic of $\tau'(k)$.}
		\end{center}
	\end{figure}

	\indent Let $L>0$ be fixed. One sees that $\det(\mathcal{D})<0$ for all $\omega>\frac{4\pi^2}{L^2}$ and $A\approx 0$. Previous tables give us a threshold value $k_0\approx 0.745 $ satisfying 
	$\mathcal{I}=\langle\mathcal{L}^{-1}1,1\rangle=0$ at $k=k_0$ with $\mathcal{I}<0$ if $k\in(0,k_0)$ and $\mathcal{I}>0$ if $k\in(k_0,1)$. Therefore, we can apply Corollary $\ref{coroest}$ to conclude that $\phi$ is spectrally stable if $k\in(0,k_0)$ and spectrally unstable if $k\in(k_0,1)$.

	\subsection{The Gardner equation - Proof of Theorem $\ref{teoestG}$.} Now, we apply the arguments in the previous sections to study the spectral stability for the Gardner equation expressed in a general form as
	
	\begin{equation}\label{gardnereq1}
		u_t+\alpha_1 (u^2)_x+\alpha_2 (u^3)_x+u_{xxx}=0,
	\end{equation}
	where $\alpha_1$ and $\alpha_2$ are non-negative constants satisfying $\alpha_1^2+\alpha_2^2\neq0$. When $\alpha_2=0$, equation $(\ref{gardnereq1})$ reduces to the well KdV equation while $\alpha_1=0$, the same equation provides us the modified KdV equation. Results of spectral/orbital stability of periodic waves in both cases have been discussed in \cite{AN1}, \cite{AN2}, \cite{DK}, and references therein. \\
	\indent Let $L>0$ be fixed and consider $\alpha_1=\alpha_2=1$. Explicit periodic waves $\phi$ for this equation can be determined as
	\begin{equation}\label{cngardner}
		\phi(x)=-\frac{1}{3}+b{\rm cn}\left(\frac{4K(k)}{L}x,k\right),
	\end{equation}
	where $b$, $\omega$ and $A$ are given by
	\begin{equation}\label{ak1}
		b=\frac{4\sqrt{2}kK(k)}{L},
	\end{equation}
	
	\begin{equation}\label{wk1}
		\omega=-\frac{1}{3}-\frac{16K(k)^2(1-2k^2)}{L^2},
	\end{equation}
	and
	\begin{equation}\label{Ak1}
		A=\frac{1}{27}+\frac{144K(k)^2(1-2k^2)}{27L^2}.
	\end{equation}
	Now, let us define $\varphi(x)=\frac{1}{3}+\phi(x)=b{\rm cn}\left(\frac{4K(k)}{L}x,k\right)$. We see that $\varphi$ solves the equation
	\begin{equation}\label{mkdv9}
		-\varphi''+\left(\omega+\frac{1}{3}\right)\varphi-\varphi^3=0,
	\end{equation}
	that is, $\varphi$ is a periodic wave with cnoidal profile for the modified KdV equation.\\
	\indent Let $\mathcal{L}_{\varphi}$ be the linearized operator around $\varphi$ for the equation $(\ref{mkdv9})$ with $g(s)=s^3$. It is a surprising fact that
	$$\mathcal{L}_{\varphi}=-\partial_x^2+\omega+\frac{1}{3}-3\varphi^2=-\partial_x^2+\omega-2\phi-3\phi^2=\mathcal{L},$$
	where $\mathcal{L}$ is the linearized operator around $\phi$ for the Gardner equation $(\ref{gardnereq1})$. Using the arguments in \cite{AN1} and \cite{DK}, we see that $n(\mathcal{L})=n(\mathcal{L}_{\varphi})=2$ and $\ker(\mathcal{L})=\ker(\mathcal{L}_{\varphi})=[\phi']$.\\
	\indent Now, since $\mathcal{L}_{\varphi}=\mathcal{L}$, one sees that $\langle\mathcal{L}_{\varphi}^{-1}1,1\rangle=\langle\mathcal{L}^{-1}1,1\rangle$. In addition, we obtain similarly as determined in \cite{AN1} and \cite{DK} that $\langle\mathcal{L}^{-1}1,1\rangle<0$ for $k\in (0,k_0)$ and $\langle\mathcal{L}^{-1}1,1\rangle>0$ for $k\in(k_0,1)$, where $k_0\approx 0.909$.\\
	\indent It remains to calculate $\mathcal{D}$ in this specific case. First, we deal with
	\begin{equation}\label{norma1}\int_0^L\phi^2 dx=\frac{L}{9}-\frac{2b}{3}\int_0^L\textrm{cn}\displaystyle\left(\frac{4K(k)}{L}x,k\right) dx+b^2\int_0^L\textrm{cn}^2\displaystyle\left(\frac{4K(k)}{L}x,k\right) dx. \end{equation}
	
	\indent The middle term on the right-hand side of $(\ref{norma1})$ has zero mean since
	\begin{equation}\label{int01}\int_0^L\textrm{cn}\displaystyle\left(\frac{4K(k)}{L}x,k\right) dx=\frac{L}{2K(k)}\int_0^{2K(k)}\textrm{cn}(u,k)\ du=0.\end{equation} 
	In addition, the last term containing the quadratic power can be simplified as
	\begin{equation}\label{int02}\int_0^L\textrm{cn}^2\displaystyle\left(\frac{4K(k)}{L}x,k\right) dx=\frac{L}{K(k)}\int_0^{K(k)}\textrm{cn}^2(u,k)\ du=\frac{L[E(k)-(1-k^2)K(k)]}{k^2K(k)}.\end{equation}
	
	\indent Collecting the arguments contained in (\ref{ak1}), (\ref{norma1}), (\ref{int01}), and (\ref{int02}), we conclude that
	\begin{equation}\label{norma2}\int_0^L\phi^2 dx=\frac{L}{9}+\frac{32 K(k)[E(k)-(1-k^2)K(k)]}{L}. \end{equation}
	
	\indent In order to calculate $\frac{d}{d\omega}\int_0^L\phi^2 dx$, we need to observe first that for all $k\in(0,1)$ we have
	\indent  \begin{equation}\label{dwdk2}\frac{d\omega}{dk}=-\frac{32K(k)[(1-2k^2)E(k)-(1-k^2)K(k)]}{k(1-k^2)L^2}>0.\end{equation}
	Thus, we are in position to give a convenient expression for $\frac{d}{d\omega}\int_0^L\phi^2 dx$ using the chain rule as 
	\begin{equation}\label{derphi2}\frac{d}{d\omega}\int_0^L\phi^2 dx=\frac{\frac{d}{dk}\int_0^L\phi^2dx}{\frac{d\omega}{dk}}.\end{equation}
	By (\ref{norma2}), we obtain for all $k\in (0,1)$ that \begin{equation}\label{dnorm2}\frac{d}{dk}\int_0^L\phi^2 dx=-\frac{32[(1-k^2)K(k)(2E(k)-K(k))-E(k)^2]}{k(1-k^2)L}>0.\end{equation}

	\indent Finally, by (\ref{dnorm2}) and (\ref{dwdk2}) we deduce \begin{equation}\label{positivephi2}\frac{1}{2}\frac{d}{d\omega}\int_0^L\phi^2 dx=\frac{\frac{d}{dk}\int_0^L\phi^2 dx}{2\frac{d\omega}{dk}}>0.\end{equation}
	
	\indent We are in position to calculate $\mathcal{D}$. We have already determined  that $\langle\mathcal{L}_{\varphi}^{-1}1,1\rangle=\langle\mathcal{L}^{-1}1,1\rangle$. Similarly, since $\varphi$ has zero mean it follows that $\langle\mathcal{L}_{\varphi}^{-1}\varphi,1\rangle=\langle\mathcal{L}^{-1}\phi,1\rangle=-\frac{d}{d\omega}\int_0^L\phi dx=0$. In addition, a straightforward calculation also gives us that $\langle\mathcal{L}_{\varphi}^{-1}\varphi,\varphi\rangle=\langle\mathcal{L}^{-1}\phi,\phi\rangle=-\frac{1}{2}\frac{d}{d\omega}\int_0^L\phi^2 dx$. Thus, for $k\neq k_0\approx 0.909$ one has from $(\ref{positivephi2})$ that $\det(\mathcal{D})$ is 
	$$\det(\mathcal{D})=-\frac{1}{2}\frac{d}{d\omega}\int_0^L\phi^2 dx<0.$$ Corollary $\ref{coroest}$ can be applied to deduce the spectral stability of $\phi$ when $k\in(0,k_0)$ and the spectral instability when $k\in (k_0,1)$.

	\section*{Acknowledgments} S.A. was supported by CAPES.  F.N. is partially supported by Funda\c{c}\~ao Arauc\'aria 002/2017, CNPq 304240/2018-4 and CAPES MathAmSud 88881.520205/2020-01.

\end{document}